\documentclass[11pt]{article}
\usepackage{latexsym, amsfonts, amscd, amssymb, verbatim, amsmath,
enumerate}

\setlength{\textheight}{9.5in}%
\setlength{\textwidth}{6.3in}%
\setlength{\topmargin}{-0.5in}%
\setlength{\oddsidemargin}{0.15in}%

\newtheorem{theorem}{Theorem}[section]

\newtheorem{lemma}{Lemma}[section]
\newtheorem{remark}{Remark}[section]

\newtheorem{example}{Example}[section]

\newcommand{\proofbox}{\hspace{\fill}{$\Box$}}
\newenvironment{proof}{\textbf{Proof}.}{\proofbox}

\def\bar{\overline}
\date{}

\numberwithin{equation}{section}

\begin{document}

\title{Regularity of multipliers and second-order optimality conditions for semilinear parabolic optimal control problems with mixed pointwise constraints} 
	
\author{H. Khanh\footnote{Department of Optimization and Control Theory,	Institute of Mathematics, Vietnam Academy of Science and Technology,	18 Hoang Quoc Viet road, Hanoi, Vietnam  and Department of Mathematics, FPT University, Hoa Lac Hi-Tech Park, Hanoi, Vietnam; email: khanhh@fe.edu.vn} \  and  B.T. Kien\footnote{Department of Optimization and Control Theory, Institute of Mathematics, Vietnam Academy of Science and Technology,18 Hoang Quoc Viet road, Hanoi, Vietnam; email: btkien@math.ac.vn}}
	
\maketitle
	
\medskip
	
\noindent {\bf Abstract.} {\small A class of  optimal control problems governed by semilinear parabolic equations with mixed pointwise constraints is considered.  We give some criteria under which the first and second-order  optimality conditions are of KKT-type. We then prove that the Lagrange multipliers belong to $L^p$-spaces. Moreover, we show that if the initial value is good enough and boundary $\partial\Omega$ has a property of positive geometric density, then multipliers and optimal solutions are H\"{o}lder continuous.}
	
\medskip
	
\noindent {\bf  Key words.} Regularity of multipliers, regularity of optimal solutions, Second-order optimality conditions,  semilinear parabolic control, mixed pointwise constraints.
	
\noindent {\bf AMS Subject Classifications.} 49K20, 35J25
	
\vspace{0.4cm}

\section{Introduction}
	
Let $\Omega$ be a bounded domain in $\mathbb{R}^N$ with $N=2$ or $3$ and its  boundary $\Gamma=\partial \Omega$ is regular enough. Let $D=H^2(\Omega)\cap H_0^1(\Omega)$ and $H=L^2(\Omega)$. We consider  the problem of finding control function $u\in L^\infty(Q)$  and the corresponding state $y\in L^\infty(Q)\cap W^{1,2}(0, T; D, H)$ which solve
\begin{align}
    &J(y, u):=\int_Q L(x, t,  y(x, t), u(x, t)) dxdt\to\inf \label{P1}\\
    &{\rm s.t.}\notag\\
    &\frac{\partial y}{\partial t} +Ay + f(y) =u\quad \text{in}\quad Q= \Omega  \times (0, T)\label{P2} \\
    & y(x, t)=0 \quad \text{on}\quad \Sigma=\Gamma\times (0, T), \quad y(x, 0)=y_0(x) \quad \text{on}\quad \Omega \label{P3}\\
    & g(x, t, y(x, t),  u(x,t))\leq 0 \quad \text{a.a.}\quad (x, t)\in Q, \label{P4}
\end{align} where $y_0\in H^1_0(\Omega)\cap L^\infty(\Omega)$,  $A$ denotes a second-order elliptic operator of the form
$$
Ay =  - \sum_{i,j = 1}^N D_j\left(a_{ij}\left( x \right) D_i y \right),
$$  $L:Q \times {\mathbb R} \times {\mathbb R} \to {\mathbb R}$,  $f: {\mathbb R} \to {\mathbb R}$ and $g: \Omega\times[0, T]\times \mathbb{R}\times \mathbb{R}\to\mathbb{R}$ are of class $C^2$. 

The theory of optimal control governed by parabolic equations has studied by many mathematicians so far. The mainly research topics in this area consist of the existence of optimal solutions, optimality conditions and numerical methods for computing  optimal solutions. In order to compute approximate optimal solutions, we need to derive  optimality conditions.  Based on optimality conditions and regularity of multipliers and optimal solutions,  we can prove the convergence of approximate solutions and give error estimates. Therefore, the optimality conditions and regularity multipliers and optimal solutions play an important role in numerical methods such as the finite element method (FEM).  Recently, there have been many papers dealing with optimality conditions for optimal control problems governed by parabolic equations. For papers which have close connection to the present work, we refer the reader to \cite{Arada-2002-1}, \cite{Arada-2002-2},  \cite{Casas-2000}, \cite{Raymond} and \cite{Troltzsch}, and many references given therein. Among these papers, Arada and Raymond \cite{Arada-2002-2},  Raymond and Zidani \cite{Raymond} focused on first-order optimality conditions including Prontryagin's principle.  Casas and F. Tr\"{o}ltzsch \cite{Casas-2015} studied second-order optimality conditions for problems where the objective functional is of quadratic form which may not  include a Tikhonov regularization term. 

In this paper, we are going to establish first and second-order  optimality conditions and study regularity of multipliers for  optimal control problem \eqref{P1}-\eqref{P4} which is governed by semilinear parabolic equations with mixed pointwise constraints. We give some criteria under which the  optimality condition is of KKT-type. We then prove that the Lagrange multipliers belong to $L^p$-spaces. It is interesting to see that, due to the parabolic nature of the problem, we do not need the Slater condition for  the existence of multipliers. We shall show that if  the linearization of the constraints (state equation
together with the mixed constraint) is surjective, then there exist regular multipliers. This has done in \cite{Meyer} and \cite{Neitzel} for elliptic optimal control problems and linear parabolic optimal control problems.  However, in our problem, the situation is more complicated because both the state equation and the constraints are nonlinear, and the control variable belongs to $L^\infty$. Therefore, it is harder to establish optimality conditions and regularity of multipliers.  When the control variable $u$ belongs to space $L^\infty$, the Lagrange multipliers are finitely additive measures rather than functions. This require us to  impose some conditions  under which Lagrange multipliers are functions in $L^p$-spaces. 

By using the Yosida-Hewitt theorem, A.R\"{o}sch and F.Tr\"{o}ltzsch showed in \cite{Rosch1} and \cite{Rosch2} that, under certain conditions, the Lagrange multipliers are of $L^p$-spaces. Recently, Binh et al.  \cite{Binh} have studied regularity of the Lagrange multipliers for optimal control problems governed by semilinear elliptic equations. Their result was proven by using a fact in \cite{Ioffe-1979} on the representation of linear functionals on $L^\infty$ by a densities in $L^1$. In this paper we will prove the obtained results by  other means. Namely, we will  embed the problem \eqref{P1}-\eqref{P4} into a specific mathematical programming problem under which the Robinson constraint qualification is satisfied. Then we  can establish first and second-order optimality conditions of KKT-type. Under the Robinson constraint qualification and optimality condition in $u$, we get the regularity of multipliers directly without using the fact from \cite{Ioffe-1979}. Besides, we will show that if the initial value is good enough and boundary $\partial\Omega$ has a property of positive geometric density, then multipliers and optimal solutions are H\"{o}lder continuous. In order to prove the H\"{o}lder continuity of optimal solutions and multipliers, we will follows the approach as in the proof of \cite[Theorem 6.1]{Rosch2} with some modifications (see Remark 4.1).  

The paper is organized as follows. Section 2 is devoted to auxiliary results. The regularity of multipliers and second-order optimality conditions are provided in Section 3.  Section 4 presents a result on H\"{o}der continuity of multipliers and optimal solutions.

\section{Some auxiliary facts}

\subsection{ State equation and linearized equation}

In this section and Section 3,  we assume that $\partial \Omega$ is of class $C^2$,  $H=L^2(\Omega)$, $V=H_0^1(\Omega)$ and $D= H^2(\Omega)\cap H_0^1(\Omega)$. The norm and the scalar product in $H$ are denoted by $|\cdot|$ and $(\cdot, \cdot)$,  respectively.  Note that $|\cdot|^2=(\cdot, \cdot)$. It is known that the embeddings 
$$
D\hookrightarrow V\hookrightarrow H
$$ are compact and each space is dense in the following one. 

Given $0<\alpha\leq 1$, we denote by $C^{0, \alpha}(\bar \Omega)$ and $C^{0, \alpha}(\bar Q)$ the space of  H\"{o}lder continuous functions of order $\alpha$,  on $\bar \Omega$ and $\bar Q$, respectively. 

Let $H^{-1}(\Omega)$ be the dual of $H_0^1(\Omega)$. We define the following function spaces
\begin{align*}
& H^1(Q)=W^{1,2}(Q)=\{y\in L^2(Q): \frac{\partial y}{\partial t}, \frac{\partial y}{\partial x_i}\in L^2(Q)\},\\
&W(0, T)=\{y\in L^2(0, T; H^1_0(\Omega)): y_t\in L^2(0, T; H^{-1}(\Omega))\},\\
& W^{1,p}(0, T; H, H)=\{y\in L^p([0, T], H): \frac{\partial y}{\partial t}\in L^p([0, T], H) \},\\
& W^{1,p}(0, T; D, H)=\{y\in L^p([0, T], D): \frac{\partial y}{\partial t}\in L^p([0, T], H) \},\\
 &U=L^\infty(Q),\\ 
 &Y = \bigg\{ y \in W^{1,2}(0, T; D, H) \cap L^\infty(Q): \quad \frac{\partial y}{\partial t} + Ay \in L^p(0, T; H) \bigg\}.
\end{align*} $Y$ is endowed with the graph norm
\begin{align*}
    \|y\|_Y:= \|y\|_{W^{1,2}(0, T; D, H)} + \|y\|_{L^\infty(Q)} +  \bigg\|\frac{\partial y}{\partial t} + Ay\bigg\|_{L^p(0, T; H)}.
\end{align*} Under the graph norm $Y$ is a Banach space. Since 
\begin{align}\label{keyEmbed1}
W^{1,2}(0, T; D, H)\hookrightarrow  C(0, T; V),  
\end{align} we have 
\begin{align}\label{keyEmbed2}
Y \hookrightarrow  W^{1,2}(0, T; D, H)\hookrightarrow  C(0, T; V).   
\end{align} 

Recall that given  $y_0\in H$ and $u\in L^2(0, T; H)$,  a function $y\in W(0, T)$ is said to be a {\it weak solution} of the semilinear parabolic equation \eqref{P2}-\eqref{P3} if 
$$
\langle y_t, v\rangle + \sum_{i,j=1}^n\int_\Omega a_{ij}D_iy D_j v dx +(f(y), v) =(u, v)\quad \forall v\in H_0^1(\Omega)
$$ and a.a. $t\in [0, T]$, and $y(0)=y_0$. If a  weak solution $y$ such that $y\in W^{1,2}(0, T; D, H)$ and $f(y)\in L^2(0, T; H)$ then we have $y_t +Ay +f(y)\in L^2(0, T; H)$ and
$$
\langle y_t, v\rangle + (Ay, v) +(f(y), v) =(u, v)\quad \forall v\in H_0^1(\Omega).
$$ Since $H_0^1(\Omega)$ is dense in $H=L^2(\Omega)$, we have  
$$
(y_t, v) + (Ay, v) +(f(y), v) =(u, v)\quad \forall v\in H.
$$  Hence
$$
y_t + Ay + f(y) =u\quad \text{a.a.}\ t\in [0, T],\ y(0)=y_0.
$$ In this case we say $y$ is a {\it strong solution} of \eqref{P2}-\eqref{P3}. From now on a  solution to \eqref{P2}-\eqref{P3} is understood a strong solution. The existence of strong solutions is provided in Lemma \ref{Lemma-stateEq} below.

Let us make the following assumptions which are related to the state equation. 

\noindent $(H1)$ Coefficients ${a_{ij}}=a_{ji} \in {L^\infty }\left( \Omega  \right)$ for every $1 \le i,j \le N$, satisfy the uniform ellipticity conditions, i.e.,  there exists a number $\alpha > 0$ such that 
\begin{align}
\alpha {\left| \xi  \right|^2} \le \sum\limits_{i,j = 1}^N {{a_{ij}}\left( x \right){\xi _i}{\xi _j}} \,\,\,\,\,\,\,\,\forall \xi  \in {{\mathbb R}^N}\,\,\,{\rm {for\,\,\,a.a.}}\,\,\,x \in \Omega .
\end{align}

\noindent $(H2)$ The mapping $f:{\mathbb R} \to {\mathbb R}$ is of class $C^2$ such that $f(0)=0$ and there exists a number $C_f\in\mathbb{R}$ such that $f'(y)\geq C_f$. Furthermore,  for each $M>0$, there exists $k_M>0$ such that 
\begin{align}\label{Phi-LocalLip}
&|f'(y)| \le k_M, \\
&|f(y_1)-f(y_2)|+ |f'(y_1)-f'(y_2)|+|f''(y_1)-f''(y_2)|\leq k_M|y_1-y_2|
\end{align} for all $y, y_1, y_2$ satisfying $ |y|, |y_1|, |y_2|\leq M$.

\begin{lemma}\label{Lemma-stateEq} Suppose that $(H1)$ and $(H2)$ are satisfied and $y_0\in L^\infty(\Omega)\cap H_0^1(\Omega)$. Then for each $u\in L^p(0, T; H)$ with $p>\frac{4}{4-N}$, the state equation \eqref{P2}-\eqref{P3} has a unique  solution   $y\in Y$ and there exist positive constants $C_1>0$ and $C_2>0$ such that 
\begin{align}\label{KeyInq0}
    \|y\|_{L^\infty(Q)}+\|f(y)\|_{L^2([0, T], H)}\leq C_1 (\|u\|_{L^p([0, T], H)} + \|y_0\|_{L^\infty(\Omega)})
\end{align} and 
\begin{align}\label{KeyInq1}
    \| \frac{\partial y}{\partial t}\|_{L^2([0, T], H)} +\|y\|_{L^2([0, T], D)}\leq C_2\big(\|u\|_{L^2([0, T], H)} +\|y_0\|_{L^\infty(\Omega)} +\|y_0\|_V\big).
\end{align}  
\end{lemma}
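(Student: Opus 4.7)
The plan is to proceed in three main steps: construct a solution via an approximation argument exploiting the one-sided Lipschitz bound $f'(y)\geq C_f$, establish the $L^\infty(Q)$ bound by a Stampacchia-type truncation argument, and then derive the $W^{1,2}(0,T;D,H)$ estimate from maximal parabolic regularity applied with right-hand side $g:=u-f(y)$.

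For existence, I would set $\tilde f(y):=f(y)-C_f y$, so that $\tilde f$ is monotone nondecreasing by (H2), and rewrite the equation as $y_t+(A+C_fI)y+\tilde f(y)=u$. A Galerkin approximation, or a truncated Schauder fixed-point argument on a ball in $C([0,T];H)$, produces approximate solutions; the monotonicity of $\tilde f$ plus the coercivity from (H1) gives uniform $L^\infty(0,T;H)\cap L^2(0,T;V)$ estimates allowing a limit to be extracted. The local Lipschitz part of (H2) is used only after the $L^\infty(Q)$ bound below is established, which guarantees that $f(y)$ stays in a region where the constants $k_M$ apply.

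The hard part is the $L^\infty(Q)$ estimate, and this is precisely where the threshold $p>\tfrac{4}{4-N}$ enters. Testing the equation with the truncation $(y-k)^+$ for $k\geq \|y_0\|_{L^\infty(\Omega)}$ and using $f(y)(y-k)^+\geq f(k)(y-k)^+ - |C_f|((y-k)^+)^2$ yields, after absorbing the linear term,
\[
\tfrac{1}{2}\tfrac{d}{dt}\|(y-k)^+\|_H^2 + \alpha\|\nabla(y-k)^+\|_H^2 \leq \int_{\Omega}u\,(y-k)^+\,dx + C\|(y-k)^+\|_H^2.
\]
A De Giorgi/Moser iteration on the level sets $A_k(t)=\{x:y(x,t)>k\}$, combined with the parabolic Sobolev embedding $L^\infty(0,T;H)\cap L^2(0,T;V)\hookrightarrow L^{2(N+2)/N}(Q)$ and Hölder's inequality on the right-hand side in the form $\|u\|_{L^p(0,T;H)}|A_k|^{\theta}$, produces a recursion of the form $\varphi(k+h)\leq Ch^{-\gamma}\varphi(k)^{1+\delta}$ with $\delta>0$ exactly when $p>\tfrac{4}{4-N}$. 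Iterating with geometric $h$ then bounds $\sup_Q y$ by $C(\|u\|_{L^p(0,T;H)}+\|y_0\|_{L^\infty(\Omega)})$; the same argument applied to $-y$ gives the lower bound and completes \eqref{KeyInq0}. Once the $L^\infty$ bound is in hand, $f(y)\in L^\infty(Q)\subset L^2(0,T;H)$, so feeding $u-f(y)$ into the linear maximal-regularity estimate for $y_t+Ay=g$ with $y_0\in V$ yields \eqref{KeyInq1}.

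Uniqueness is short: if $y_1,y_2$ are two solutions and $w=y_1-y_2$, testing $w_t+Aw+(f(y_1)-f(y_2))=0$ with $w$, using coercivity of $A$ and the bound $(f(y_1)-f(y_2))w\geq C_f w^2$, gives $\tfrac{d}{dt}|w|^2\leq 2|C_f|\,|w|^2$ with $w(0)=0$, so Gr\"onwall forces $w\equiv 0$. The main obstacle is the Stampacchia iteration: one must bookkeep constants carefully so that the $L^\infty$ bound depends only on the norms appearing in \eqref{KeyInq0}, and verify that the sharp integrability threshold $p>\tfrac{4}{4-N}$ is exactly the condition under which the level-set recursion closes.
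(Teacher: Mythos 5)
Your proposal is correct in outline, but it takes a much longer, self-contained route than the paper. The paper's proof is essentially two citations plus a bootstrap: existence, uniqueness, the $L^\infty(Q)$ bound and \eqref{KeyInq0} are taken directly from Casas--Wachsmuth (Theorem 2.1 and Lemma 2.2.III of \cite{Casas-2023}), where the exponent restriction $p>\frac{4}{4-N}$ already appears; then, since $f(y)\in L^\infty(Q)$, the equation is read as the linear problem $y_t+Ay=u-f(y)$ with $L^2$ right-hand side and $y_0\in V$, and the improved-regularity theorem of Evans (\cite[Theorem 5, p.~360]{Evan}) gives $y\in W^{1,2}(0,T;D,H)$ and \eqref{KeyInq1}; finally $u-f(y)\in L^p(0,T;H)$ gives $y_t+Ay\in L^p(0,T;H)$, i.e.\ $y\in Y$. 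Your second step coincides with this bootstrap, while your first step amounts to reproving the cited result by Galerkin/monotonicity plus a De Giorgi--Stampacchia iteration; the threshold you identify is indeed the Ladyzhenskaya--Solonnikov--Ural'tseva-type condition $\frac{N}{4}+\frac{1}{p}<1$, so the plan is sound and has the advantage of being self-contained, at the cost of considerable length. Two points in your sketch need repair if carried out in full: (a) you drop the term $f(k)(y-k)^+$ after writing $f(y)(y-k)^+\geq f(k)(y-k)^+-|C_f|((y-k)^+)^2$, but under $(H2)$ only $f'\geq C_f$ with $C_f$ possibly negative and $f(0)=0$, so $f(k)$ need not be nonnegative for $k\geq 0$; work instead with the monotone part $\tilde f(y)=f(y)-C_f y$ (for which $\tilde f(k)\geq \tilde f(0)=0$) and treat the remaining linear term $C_f y$ as a lower-order term in the iteration; (b) in the Galerkin/fixed-point step, $f$ is only locally Lipschitz with no growth restriction, so you must first truncate $f$ at the level given by the a priori $L^\infty$ bound (uniform in the truncation) and then remove the truncation, exactly as in the cited reference; without this, passing to the limit in the nonlinear term is not justified. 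With these standard fixes your argument closes and reproduces the lemma.
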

\begin{proof} By \cite[Theorem 2.1]{Casas-2023} and \cite[Lemma 2.2.III]{Casas-2023}, for each $u\in L^p(0, T; H)$ with $p>\frac{4}{4-N}$, the state equation has a unique solution $y\in L^\infty(Q)\cap W^{1,2}(0, T; V, H)$ such that $f(y)\in L^2(0, T; H)$ and inequality \eqref{KeyInq0} is fulfilled.  By $(H2)$, we have $f(y)\in L^\infty(Q)$. We claim that $y\in W^{1,2}(0, T; D, H)$. In fact,  we have $u-f(y)\in L^2(Q)$ and  
$$
y_t + Ay = u-f(y),\  y(0)=y_0.
$$ By \cite[Theorem 5, p. 360]{Evan}, we get $y\in W^{1,2}(0, T; D, H)\cap L^\infty(0, T; V)$ and \eqref{KeyInq1} is valid. Since $u-f(y)\in L^p(0, T; H)$, we have $y_t+Ay\in L^p(0, T; H)$. The lemma is proved.

\end{proof}

Let us consider the linearized equation 
\begin{align}\label{LinearizedEq1}
   y_t + A y + c(x,t)y =u,\quad y(0)=y_0. 
\end{align} 

\begin{lemma}\label{Lemma-LinearizedEq} Suppose that $u\in L^p(0, T;  H)$ with $p>\frac{4}{4-N}$,  $y_0\in L^\infty(\Omega)\cap H^1_0(\Omega)$ and $c\in L^\infty (Q)$. Then equation \eqref{LinearizedEq1} has a unique solution $y\in Y$.
\end{lemma}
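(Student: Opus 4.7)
The plan is to reduce the claim to Lemma \ref{Lemma-stateEq} by a fixed-point argument, exploiting the fact that the only difference between \eqref{LinearizedEq1} and the state equation is the bounded linear term $c(x,t)y$. Define a map $\mathcal{T}$ on $L^\infty(Q)$ by $\mathcal{T}(z):=y$, where $y$ is the unique strong solution, provided by Lemma \ref{Lemma-stateEq} applied with $f\equiv 0$ (which trivially satisfies $(H2)$), of
\begin{align*}
y_t + Ay = u - c(x,t) z\ \text{in } Q,\quad y=0\ \text{on } \Sigma,\quad y(\cdot,0)=y_0.
\end{align*}
This is well-defined: since $c\in L^\infty(Q)$ and $z\in L^\infty(Q)$, we have $u-cz\in L^p(0,T;H)$ with the required $p>\tfrac{4}{4-N}$, so Lemma \ref{Lemma-stateEq} yields $y\in Y$, and in particular $y\in L^\infty(Q)$ with the quantitative bound \eqref{KeyInq0}. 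A fixed point of $\mathcal{T}$ is precisely a solution of \eqref{LinearizedEq1} in $Y$.

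To produce a fixed point I would exploit linearity. For $z_1,z_2\in L^\infty(Q)$, the difference $w:=\mathcal{T}(z_1)-\mathcal{T}(z_2)$ satisfies $w_t+Aw=-c(z_1-z_2)$ with $w(0)=0$. Testing against $w$ and using the ellipticity from $(H1)$ gives
\begin{align*}
\tfrac{1}{2}|w(t)|^2+\alpha\int_0^t\|w(s)\|_V^2\,ds\le \|c\|_{L^\infty(Q)}\int_0^t |z_1(s)-z_2(s)|\,|w(s)|\,ds.
\end{align*}
On a short interval $[0,\tau]$ this yields $\|w\|_{C([0,\tau];H)}\le C\sqrt{\tau}\,\|z_1-z_2\|_{C([0,\tau];H)}$, so $\mathcal{T}$ is a strict contraction on $C([0,\tau];H)$ (and hence on the closed subset consisting of functions in $L^\infty(Q)$, using the a priori bound \eqref{KeyInq0}) once $\tau$ is small enough depending only on $\|c\|_{L^\infty(Q)}$. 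Banach's fixed-point theorem produces a unique fixed point $y^\star$ on $[0,\tau]$, which belongs to $Y_\tau$ by construction. Because the contraction constant depends only on $\tau$ and $\|c\|_\infty$, the argument can be iterated on $[\tau,2\tau],[2\tau,3\tau],\dots$, the intermediate values $y^\star(k\tau)\in V\cap L^\infty(\Omega)$ serving as the new initial data (using the embedding \eqref{keyEmbed2}). Concatenation produces a solution $y^\star\in Y$ on the full interval $[0,T]$.

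Uniqueness in $Y$ is easy: if $y_1,y_2\in Y$ both solve \eqref{LinearizedEq1}, then $w=y_1-y_2$ satisfies $w_t+Aw+cw=0$, $w(0)=0$, and the same energy estimate plus Gronwall's lemma force $w\equiv 0$.

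The main technical point is ensuring the fixed-point argument remains inside a space on which Lemma \ref{Lemma-stateEq} can be applied: the input to $\mathcal{T}$ must lie in $L^\infty(Q)$ so that $u-cz\in L^p(0,T;H)$. This is exactly why I run the contraction on $C([0,\tau];H)$ intersected with a bounded ball of $L^\infty(Q)$ (invariance following from \eqref{KeyInq0}), rather than on the smaller natural space $Y$ where closedness of $\mathcal{T}$ would require reproving all of the Evans-type estimates. Once existence is secured, membership in $Y$ comes automatically from a final application of Lemma \ref{Lemma-stateEq} with the right-hand side $u-cy^\star\in L^\infty(Q)\subset L^p(0,T;H)$.
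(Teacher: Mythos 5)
Your argument is correct, but it follows a genuinely different route from the paper. The paper does not use a fixed-point scheme at all: it first solves the full linear equation \eqref{LinearizedEq1}, zeroth-order term included, by Evans' linear regularity theorem (Theorem 5, Chapter 7), obtaining a unique $y\in W^{1,2}(0,T;D,H)\cap L^\infty(0,T;H_0^1(\Omega))$ on the whole interval; it then upgrades $y$ to membership in $Y$ (i.e.\ the $L^\infty(Q)$ bound and $y_t+Ay\in L^p(0,T;H)$) by introducing the auxiliary equation $z_t+Az+c_0z=u+(c_0-c)y$ with $c_0\ge\|c\|_{L^\infty(Q)}$, whose right-hand side lies in $L^p(0,T;H)$, solving it in $Y$ via Lemma \ref{Lemma-stateEq} applied with $f(z)=c_0z$, and showing $z=y$ by an energy estimate. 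Your contraction-plus-time-stepping argument buys self-containedness (only Lemma \ref{Lemma-stateEq} with $f\equiv 0$ and elementary energy estimates are used, and uniqueness comes for free from Gronwall), but it pays for it with bookkeeping that the paper avoids: you must restart at $t=k\tau$ with data in $L^\infty(\Omega)\cap H_0^1(\Omega)$ (the $H_0^1$ part is \eqref{keyEmbed2}, but the $L^\infty(\Omega)$ part is not literally in that embedding and needs the short argument the paper gives when checking that $F_2$ is well defined in the proof of Theorem \ref{Theorem 1}), and you must glue the pieces into a single element of $Y$. Two small simplifications of your own scheme: the invariant $L^\infty(Q)$-ball is unnecessary, since $z\in C([0,\tau];H)$ already gives $cz\in L^\infty(0,\tau;H)\subset L^p(0,\tau;H)$, so $\mathcal{T}$ is a self-map of all of $C([0,\tau];H)$ and the contraction can be run there directly; and if you do keep the ball, its invariance requires $\tau$ small depending also on the constant $C_1$ of \eqref{KeyInq0} and $|\Omega|$, not only on $\|c\|_{L^\infty(Q)}$. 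The paper's route is shorter because linear theory gives global-in-time existence at once; yours generalizes more readily to situations where no off-the-shelf linear result in the needed class is available.
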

\begin{proof} Since  $c\in L^\infty(Q)$, there exists a constant $c_0>0$ such that $|c(x,t)|\leq c_0$ for a.a. $(x, t)\in Q$.  By Theorem 5 in \cite[Chapter 7]{Evan}, equation \eqref{LinearizedEq1} has a unique solution $y\in W^{1,2}(0, T; D, H)\cap L^\infty(0, T; H_0^1(\Omega))$. It follows that $c(x,t) y\in L^p(0, T; H)$. Let us consider the equation
\begin{align}\label{LinearizeEq2}
z_t + Az + c_0 z = u + (c_0-c)y,\quad z(0)=y_0. 
\end{align} By \cite[Theorem 5, Chapter 7]{Evan} and Lemma \ref{Lemma-stateEq}, the equation has a unique solution $z\in  Y$.  From \eqref{LinearizedEq1} and \eqref{LinearizeEq2}, we have
\begin{align}
z_t-y_t +A (z-y) + c_0(z-y)=0,\ (z-y)(0)=0. 
\end{align} Taking the scalar product both side with $z-y$ and using $(H1)$, we get
$$
\frac{1}2\frac{d}{dt}|z-y|^2 + \alpha |\nabla (z-y)|^2 + c_0|z-y|^2 \leq 0. 
$$    Integrating on $[0, T]$, we obtain
$$
\sup_{t\in[0, T]}|z-y|^2  + \alpha \|z-y\|^2_{L^2(0, T; D)} + c_0\|z-y\|^2_{L^2(0, T; H)}\leq 0.
$$ This implies that $z=y$ a.a. in $Q$. The conclusion is followed. 
\end{proof}

\subsection{A specific mathematical programming problem}

Let $E, W$  and $Z$ be  Banach spaces with the dual spaces $E^*$, $W^*$ and  $Z^*$, respectively. Let $M$ be a nonempty and closed subset in $Z$ and $\bar z\in M$. The set
\begin{align*}
T(M, \bar z):&=\left\{h\in Z\;|\; \exists t_n\to 0^+, \exists h_n\to h, \bar z+t_nh_n\in M \ \ \forall n\in\mathbb{N}\right\},
\end{align*}
is called  {\em the contingent cone} to $M$ at $\bar z$, respectively. The set 
$$
N(M, \bar z):=\{z^*\in Z^*| \langle z^*, z\rangle\leq 0\quad\forall z\in T(M, \bar z)\}
$$ is called {\em the normal cone} to $M$ at $\bar z$.  It is well-known that when $M$ is convex, then
$$
T (M, \bar z)=\overline{\rm cone}(M-\bar z)
$$ and 
$$
N(M, \bar z)=\{z^*| \langle z^*, z-\bar z\rangle \leq 0\quad \forall z\in M\},
$$
where 
$$
{\rm cone }(M-\bar z):=\{\lambda (h-\bar z)\;|\; h\in M, \lambda>0\}.
$$
Let $\psi: Z \to {\mathbb R}$; $F: Z \to W$ and $G: Z \to E$ be given mappings and $K$ be a nonempty closed convex subset in $E$. We now consider the optimization problem
\begin{align}
&\psi(z) \to \inf \label{mP1}\\
&\text{s.t.}\notag\\
&F(z) = 0, \label{mP2}\\
&G(z) \in K.\label{mP3}
\end{align}  Given a feasible point $z_0$ of  \eqref{mP1}-\eqref{mP3}, we impose the following assumptions.

\noindent $(A1)$ The mappings $\psi, F$ and $G$ are of class $C^2$ around $z_0$.

By define $K_1=\{0\}\times K$ and $G_1=(F, G)$. Then constraints \eqref{mP2} and \eqref{mP3} are equivalent to the constraint:
$G_1(z) \in K_1$.

Recall that a feasible point  $z_0$ is said to satisfy the Robinson constraint qualification  if
\begin{align}\label{RobinsonCond1}
0\in {\rm int} \{G_1(z_0) + DG_1(z_0)Z -K_1\}. 
\end{align} By Proposition 2.95 in \cite{Bonnans-2000}, condition \eqref{RobinsonCond1} is equivalent to
\begin{align}\label{RobinsonCond2}
   DG_1(z_0)Z -{\rm cone}(K_1-G_1(z_0))=W\times E.  
\end{align} This means that for any $(v, e)\in W\times E$, there exists $z\in Z$ and $w\in {\rm cone}(K-G(z_0))$ such that 
\begin{equation}
\begin{cases}
DF(z_0)z=v,\\
DG(z_0)z-w=e.
\end{cases}
\end{equation}  The Robinson constraint qualification guarantees that the necessary optimality conditions are of KKT type. In this section we need this condition. 

\noindent $(A2)$ $z_0$ satisfies the Robinson constraint qualification \eqref{RobinsonCond2}.

\medskip

Let us denote by ${\mathcal C}_0[z_0]$  the set of vectors $d \in Z$ satisfying the following conditions:
\begin{itemize}
\item [$(a_1)$] $D\psi(z_0)d \leq 0,$

\item [$(a_2)$] $DF(z_0)d = 0,$

\item [$(a_3)$] $DG(z_0)d  \in {\rm cone}(K-G(z_0))$
\end{itemize}  and by ${\mathcal C}[z_0]$ the closure of ${\mathcal C}_0[z_0]$ in $Z$, which is called the critical cone at $z_0$. Each vector $d\in \mathcal{C}[z_0]$ is called a critical direction.  Let 
\begin{align}
{\mathcal L}\left( {z,{v^*},{e^*}} \right): = \psi(z) + \langle v^*, F(z)\rangle +\langle e^*, G(z)\rangle 
\end{align} be the Lagrange function  associated with the problem \eqref{mP1}--\eqref{mP3}, where $z \in Z $, $v^* \in V^*$ and $e^* \in E^*$. We say that $v^*$ and $e^*$ are Lagrange multipliers at $z_0$ if the following conditions are fulfilled:
\begin{align}
&D_z {\mathcal L}\left( {z_0,{v^*},{e^*}} \right)= D\psi(z_0) + DF(z_0)^*v^* + DG(z_0)^* e^*=0,\\
& e^*\in N(K, G(z_0)). 
\end{align} We denote by $\Lambda[z_0]$ the set of Lagrange multipliers at $z_0$. 

We have the following result on  first and second-order necessary optimality conditions. 

\begin{lemma}\label{Lemma-Optim} Suppose that $(A1)$ and $(A2)$ are satisfied, and $z_0$ is a locally optimal solution. Then $\Lambda[z_0]$ is nonempty and 
\begin{align}\label{NonnegativeSOC}
\sup_{(v^*, e^*)\in\Lambda[z_0]}\big(D^2\psi(z_0) d^2 + v^* D^2F(z_0)d^2 + e^*D^2G(z_0)d^2\big)\geq 0\quad\forall d\in{\mathcal C}[z_0]. 
\end{align} 
\end{lemma}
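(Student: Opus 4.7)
The plan is to view Lemma 2.3 as a direct consequence of the abstract Lagrange multiplier theorem and the abstract second-order necessary condition from Bonnans--Shapiro \cite{Bonnans-2000}, after combining the equality constraint $F(z)=0$ and the cone constraint $G(z)\in K$ into a single conic constraint. This is the reformulation already suggested just before (A1)--(A2) in the paper, so I would take it as the starting point.

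First I would set $G_1(z)=(F(z),G(z))$ and $K_1=\{0\}\times K\subset W\times E$, so that the feasible set becomes $\{G_1(z)\in K_1\}$ and the Lagrangian with dual variable $\lambda^*=(v^*,e^*)$ reproduces $\mathcal L(z,v^*,e^*)$. Hypothesis (A2) is exactly the surjectivity form of Robinson's regularity for this combined constraint; by Proposition~2.95 of \cite{Bonnans-2000} it is equivalent to $0\in\mathrm{int}\{G_1(z_0)+DG_1(z_0)Z-K_1\}$. Applying the generalized KKT theorem (Theorem~3.9 in \cite{Bonnans-2000}) with (A1) and local optimality of $z_0$ yields $\lambda^*\in N(K_1,G_1(z_0))$ with $D\psi(z_0)+DG_1(z_0)^*\lambda^*=0$. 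Using the identification
\begin{equation*}
N(K_1,G_1(z_0))=W^*\times N(K,G(z_0)),
\end{equation*}
this produces $v^*\in W^*$ and $e^*\in N(K,G(z_0))$ with $D_z\mathcal L(z_0,v^*,e^*)=0$, i.e.\ $\Lambda[z_0]\neq\emptyset$.

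For the second-order statement, the abstract second-order necessary condition under Robinson's CQ (e.g.\ Theorems~3.45 and~3.50 in \cite{Bonnans-2000}) gives, for every $d\in\mathcal C_0[z_0]$,
\begin{equation*}
\sup_{\lambda^*\in\Lambda[z_0]}\bigl\{D^2_z\mathcal L(z_0,\lambda^*)(d,d)-\sigma\bigl(\lambda^*,T^{i,2}_{K_1}(G_1(z_0),DG_1(z_0)d)\bigr)\bigr\}\geq 0,
\end{equation*}
where $T^{i,2}_{K_1}$ is the inner second-order tangent set and $\sigma$ the support function. To eliminate the $\sigma$-term I would exploit convexity of $K_1$ and the normal-cone condition: for $\kappa\in T^{i,2}_{K_1}$ the defining estimate $\mathrm{dist}(G_1(z_0)+tDG_1(z_0)d+\tfrac{t^2}{2}\kappa,K_1)=o(t^2)$ combined with $\lambda^*\in N(K_1,G_1(z_0))$ yields, after expansion in $t$, the inequality
\begin{equation*}
t\,\langle\lambda^*,DG_1(z_0)d\rangle+\tfrac{t^2}{2}\langle\lambda^*,\kappa\rangle\leq o(t^2).
\end{equation*}
On $\mathcal C_0[z_0]$ the complementarity identity $\langle\lambda^*,DG_1(z_0)d\rangle=0$ holds (it follows by combining $(a_1)$, $(a_3)$, the stationarity equation, and $e^*\in N(K,G(z_0))$), so dividing by $t^2$ and letting $t\downarrow 0$ gives $\langle\lambda^*,\kappa\rangle\leq 0$. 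Hence $\sigma(\lambda^*,T^{i,2}_{K_1})\leq 0$ and the previous inequality reduces to \eqref{NonnegativeSOC} on $\mathcal C_0[z_0]$.

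To pass from $\mathcal C_0[z_0]$ to its closure $\mathcal C[z_0]$ I would invoke continuity of $d\mapsto D^2_z\mathcal L(z_0,\lambda^*)(d,d)$ together with weak-$*$ boundedness of $\Lambda[z_0]$, which is itself a standard by-product of Robinson's CQ; this is enough to preserve \eqref{NonnegativeSOC} under closure of the critical cone. I expect the only genuinely delicate point to be the sign of the curvature term $\sigma(\lambda^*,T^{i,2}_{K_1})$, because without the complementarity identity on critical directions the Bonnans--Shapiro formula still carries the $\sigma$-correction. Once that sign is secured the argument is essentially a verbatim application of the abstract machinery in \cite{Bonnans-2000} to the reformulated single-constraint problem.
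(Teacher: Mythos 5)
Your overall route is the same as the paper's (Bonnans--Shapiro Theorem~3.9 for nonemptiness and weak-$*$ compactness of $\Lambda[z_0]$, Theorem~3.45 for the second-order condition on $\mathcal{C}_0[z_0]$, then passage to the closure by continuity of $d\mapsto\sup_{\lambda^*\in\Lambda[z_0]}D^2_z\mathcal{L}(z_0,\lambda^*)(d,d)$), but the step where you eliminate the $\sigma$-term has the inequality backwards, and as written it does not prove \eqref{NonnegativeSOC}. From the abstract condition $\sup_{\lambda^*}\{D^2_z\mathcal{L}(z_0,\lambda^*)(d,d)-\sigma(\lambda^*,T^{i,2}_{K_1})\}\ge 0$ you can drop the curvature term only if $\sigma(\lambda^*,T^{i,2}_{K_1})\ge 0$, since then $D^2_z\mathcal{L}(d,d)\ge D^2_z\mathcal{L}(d,d)-\sigma$. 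What you prove is $\sigma(\lambda^*,T^{i,2}_{K_1})\le 0$; with that inequality $D^2_z\mathcal{L}(d,d)-\sigma\ge D^2_z\mathcal{L}(d,d)$, so the abstract inequality could hold while $\sup_{\lambda^*}D^2_z\mathcal{L}(d,d)<0$, and your reduction fails.

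The missing ingredient is the opposite bound $\sigma(\lambda^*,T^{i,2}_{K_1}(G_1(z_0),DG_1(z_0)d))\ge 0$, and this is exactly where the definition of $\mathcal{C}_0[z_0]$ via the \emph{radial} cone enters: for $d\in\mathcal{C}_0[z_0]$ one has $DF(z_0)d=0$ and $DG(z_0)d=\mu(k-G(z_0))$ with $k\in K$, $\mu>0$ (or $DG(z_0)d=0$), so by convexity of $K$ the point $G_1(z_0)+tDG_1(z_0)d$ lies in $K_1=\{0\}\times K$ for all small $t>0$; hence $0\in T^{i,2}_{K_1}(G_1(z_0),DG_1(z_0)d)$ and $\sigma(\lambda^*,T^{i,2}_{K_1})\ge\langle\lambda^*,0\rangle=0$. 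Combined with your (correct) estimate $\sigma\le 0$, obtained from $\lambda^*\in N(K_1,G_1(z_0))$ and the complementarity identity $\langle\lambda^*,DG_1(z_0)d\rangle=0$, the $\sigma$-term is in fact equal to zero on $\mathcal{C}_0[z_0]$ --- which is precisely the observation the paper makes --- but only the lower bound $\sigma\ge 0$ is needed to obtain \eqref{NonnegativeSOC} there. With that correction, the remaining steps (existence of multipliers, complementarity on critical directions, and the extension to $\mathcal{C}[z_0]$ using weak-$*$ compactness of $\Lambda[z_0]$ and the uniform bound on the bilinear forms) are sound and coincide with the paper's argument.
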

\begin{proof} By \cite[Theorem 3.9]{Bonnans-2000}, the set $\Lambda[z_0]$ is nonempty and weakly star compact. By \cite[Theorem 3.45]{Bonnans-2000}, \eqref{NonnegativeSOC} is valid for all $d\in \mathcal{C}_0[z_0]$. Note that for each $d\in\mathcal{C}_0[z_0]$, the sigma term equal to zero. By the compactness of $\Lambda[z_0]$, the mapping $d\mapsto \sup_{(v^*, e^*)\in\Lambda[z_0]} D^2_z\mathcal{L}(\bar z, v^*, e^*)[d,d]$ is continuous. Therefore, inequality \eqref{NonnegativeSOC} is valid for $d\in\mathcal{C}[z_0]$ which is the closure of $\mathcal{C}_0[z_0]$. 
\end{proof}

\section{Existence of regular multipliers in  second-order optimality conditions}

Let us denote by $\Phi$ the feasible set of the problem \eqref{P1}-\eqref{P4}. Hereafter, the symbol $\phi[x,t]$ stands for $\phi(x, t, \bar y(x, t), \bar u(x, t))$ with $(\bar y, \bar u)\in\Phi$.

Let $\phi: \Omega\times[0, T]\times\mathbb{R}\times\mathbb{R}\to\mathbb{R}$ be a mapping which stands for $L$ and $g$. We impose the following hypotheses. 

\noindent $(H3)$ $\phi$ is a Carath\'{e}odory  function and  for each $(x,t)\in \Omega\times [0, T]$, $\phi(x, t, \cdot, \cdot)$ is of class $C^2$ and satisfies the following property:  for each $M>0$, there exists $l_{\phi,M}>0$ such that 
\begin{align*}
    &\quad |\phi(x, t, y_1, u_1)-\phi(x, t, y_2, u_2)|+|\phi_y(x, t, y_1, u_1)-\phi_y(x, t, y_2, u_2)|\\
    &+|\phi_u(x, t, y_1, u_1)-\phi_u(x, t, y_2, u_2)|+|\phi_{yy}(x, t, y_1, u_1)-\phi_{yy}(x, t, y_2, u_2)|\\
    &+|\phi_{yu}(x, t, y_1, u_1)-\phi_{yu}(x, t, y_2, u_2)|+ |\phi_{uu}(x, t, y_1, u_1)-\phi_{uu}(x, t, y_2, u_2)|\\
    &\leq l_{\phi,M}(|y_1-y_2|+ |u_1-u_2|)
\end{align*}  for all $(x, t, y_i, u_i)\in \Omega\times[0, T]\times \mathbb{R}\times\mathbb{R}$ satisfying $|y_i|, |u_i|\leq M$ with $i=1,2$. Furthermore, we require that the functions $\phi_y(\cdot, \cdot, 0, 0), \phi_u(\cdot, \cdot, 0, 0), \phi_{yy}(\cdot, \cdot, 0, 0), \phi_{yu}(\cdot, \cdot, 0, 0)$ and $\phi_{uu}(\cdot, \cdot, 0, 0)$ belong to $L^\infty(Q)$. 

\noindent $(H4)$ The function $\frac{1}{g_u[\cdot, \cdot]}$ belongs to $L^\infty (Q)$.

Note that hypothesis $(H3)$ makes sure that $J$ and $g$ are of class $C^2$ on $Y\times U$. Meanwhile, $(H4)$ guarantees that the Robinson constraint qualification is satisfied and the Lagrange multipliers belong to $L^1(\Omega)$.  Let us give an illustrative example showing that $g$ satisfies assumption $(H4)$.  

\begin{example}{\rm 
The following formulae of $g$ satisfies assumption $(H4)$:
\begin{align*}
    & g(x, t, y, u) = u, \\
    & g(x, t, y, u) = \frac{1}{3}y^2u^3 - yu^2 + (2 + |t-1|)u, \\
    & g(x, t, y, u) = \pi(x, t) + y^4u^3 + (y^2 + 1)u
\end{align*}
where $\pi$ is a continuous function on $Q$. 
}
\end{example}

In the sequel, we assume that 
\begin{align}
K_\infty=\{u\in U: u(x, t)\leq 0\ {\rm a.a.}\ (x, t)\in Q\}.  
\end{align}
Let ${\mathcal C}_0 (\bar z)$ be a set of all $z = (y, u) \in Y\times U$ such that the following conditions hold: 
\begin{itemize}
\item [$(c_1)$] $\displaystyle \int_Q (L_y[x, t]y(x, t) + L_u[x, t]u(x, t))dxdt  \le 0$; 

\item [$(c_2)$] $\dfrac{\partial y}{\partial t} + Ay + f'(\bar y)y = u, \quad y(0) = 0;$ 

\item[$(c_3)$] $g_y[\cdot, \cdot]y + g_u[\cdot, \cdot]u \in {\rm cone}(K_\infty - g[\cdot, \cdot]).$ 
\end{itemize}
Define ${\mathcal C}_2^*(\bar z) =  \bar {{\mathcal C}_0 (\bar z)}$, which is the closure of ${\mathcal C}_0 (\bar z)$ in $W^{1,2}(0, T; D, H)\times L^2(Q)$. The set ${\mathcal C}_2^*(\bar z)$ is called a critical cone to problem \eqref{P1}-\eqref{P4} at $\bar z$. Each vector $(y, u)\in\mathcal{C}_2^*[\bar z]$ is called a critical direction. 

The following theorem gives  first and second-order necessary optimality conditions and the regularity of multipliers.

\begin{theorem}\label{Theorem 1}  Suppose that assumptions $(H1)-(H4)$ are satisfied and $(\bar y, \bar u) \in \Phi$ is a locally optimal solution of the problem \eqref{P1}-\eqref{P4}.  Then  there exist a function $\varphi \in W^{1, 2}(0, T; D, H) \cap L^\infty(Q)$ and a function $e\in L^\infty(Q)$ such that the following conditions are fulfilled:

\noindent $(i)$ (the adjoint equation) 
$$
-\dfrac{\partial \varphi}{\partial t} + A^* \varphi + f'(\bar y)\varphi = -L_y[.,.] - eg_y[.,.], \quad \varphi(., T) = 0, 
$$ where $A^*$ is the adjoint operator of $A$, which is defined by 
$$
A^*\varphi =  - \sum_{i,j = 1}^N D_i\left(a_{ij}\left( x \right) D_j \varphi \right);
$$ 
 
 \noindent $(ii)$ (the stationary condition in $u$) 
$$
L_u[x, t] - \varphi(x,t) + e(x,t)g_u[x, t] = 0 \ \ {\rm a.a.} \  (x, t) \in Q;
$$

\noindent $(iii)$ (the complementary condition) 
\begin{align}\label{ComlementCond}
e(x,t)g[x, t]=0 \   {\rm and} \   e(x, t)\geq 0\quad {\rm a.a.} \ (x,t)\in Q;
\end{align}

\noindent $(iv)$ (the nonnegative second-order condition)
    \begin{align}\label{SOC1}
        &\int_Q (L_{yy}[x, t]y^2 + 2L_{yu}[x, t]yu + L_{uu}u^2)dxdt \nonumber \\
        &+ \int_Q e(g_{yy}[x, t]y^2 + 2g_{yu}[x, t]yu + g_{uu}u^2)dxdt + \int_Q \varphi f''(\bar y)y^2dxdt \geq 0\quad \forall (y, u) \in \mathcal C_2^*[(\bar y, \bar u)].
    \end{align} 
\end{theorem}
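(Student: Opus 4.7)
My plan is to recast problem \eqref{P1}--\eqref{P4} as an instance of the abstract program \eqref{mP1}--\eqref{mP3} and apply Lemma \ref{Lemma-Optim}. I take $Z = Y \times U$, $W = L^p(0,T;H) \times V$, $E = L^\infty(Q)$, $K = K_\infty$, and define
\[
\psi(y,u) = J(y,u),\quad F(y,u) = \left(\frac{\partial y}{\partial t} + Ay + f(y) - u,\; y(0) - y_0\right),\quad G(y,u) = g(\cdot,\cdot,y,u).
\]
Hypotheses $(H2)$ and $(H3)$ together with the embedding \eqref{keyEmbed2} ensure that the Nemytskii operators built from $L$, $f$, $g$ are of class $C^2$ from $Y \times U$ into the corresponding target spaces, so $(A1)$ holds at $z_0 = (\bar y, \bar u)$.

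The technical heart is verifying Robinson's condition $(A2)$. Given arbitrary data $(v_1, v_2, e_0) \in L^p(0,T;H) \times V \times L^\infty(Q)$, I pick $w = 0 \in {\rm cone}(K_\infty - g[\cdot,\cdot])$ (which is legitimate because $g[\cdot,\cdot] \le 0$ a.e.) and look for $(y,u) \in Y \times U$ solving
\[
y_t + Ay + f'(\bar y) y - u = v_1,\quad y(0) = v_2,\quad g_y[\cdot,\cdot] y + g_u[\cdot,\cdot] u = e_0.
\]
Hypothesis $(H4)$ lets me write $u = (e_0 - g_y[\cdot,\cdot] y)/g_u[\cdot,\cdot]$ and insert this into the first equation, producing a linearized parabolic equation with $L^\infty$ coefficient $f'(\bar y) + g_y[\cdot,\cdot]/g_u[\cdot,\cdot]$ and right-hand side in $L^p(0,T;H)$. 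Lemma \ref{Lemma-LinearizedEq} produces $y \in Y$; the formula for $u$ then returns $u \in L^\infty(Q)$, completing $(A2)$.

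Lemma \ref{Lemma-Optim} now supplies multipliers $v^* = (\varphi, \mu) \in L^{p'}(0,T;H) \times V^*$ and $e^* \in L^\infty(Q)^*$ with $e^* \in N(K_\infty, g[\cdot,\cdot])$. Testing the stationarity against $(h, 0)$ for $h \in Y$ and integrating by parts in $t$ and $x$ identifies $\varphi$ as the solution of the backward adjoint equation in (i), with the terminal condition $\varphi(\cdot, T) = 0$ extracted from the free terminal data of $h$; testing against $(0, k)$ for $k \in U$ yields
\[
\langle e^*, g_u[\cdot,\cdot] k\rangle = \int_Q (\varphi - L_u[\cdot,\cdot])\, k\, dx dt \qquad \forall k \in L^\infty(Q).
\]
Because $(H4)$ makes multiplication by $g_u[\cdot,\cdot]$ an isomorphism of $L^\infty(Q)$, this identity forces $e^*$ to be represented by the density $e = (\varphi - L_u[\cdot,\cdot])/g_u[\cdot,\cdot]$, a priori only in $L^{p'}(Q)$. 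Substituting $e$ back into the adjoint equation gives a closed-loop problem
\[
-\varphi_t + A^*\varphi + \left(f'(\bar y) + \frac{g_y[\cdot,\cdot]}{g_u[\cdot,\cdot]}\right) \varphi = -L_y[\cdot,\cdot] + L_u[\cdot,\cdot]\frac{g_y[\cdot,\cdot]}{g_u[\cdot,\cdot]}, \quad \varphi(\cdot, T) = 0,
\]
with $L^\infty$ coefficient and $L^\infty$ right-hand side; Lemma \ref{Lemma-LinearizedEq} applied after time reversal yields $\varphi \in Y \subset L^\infty(Q)$, which upgrades $e$ to $L^\infty(Q)$ and finishes (i)--(ii). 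Translating the normal cone condition $e^* \in N(K_\infty, g[\cdot,\cdot])$ via the density representation gives the sign and complementarity (iii).

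Finally, Lemma \ref{Lemma-Optim} supplies the quadratic inequality on the critical cone $\mathcal{C}[z_0]$ formed in the $Y \times U$ norm; uniqueness of the multiplier (which follows from the construction of $e$ via $(H4)$ and from Lemma \ref{Lemma-LinearizedEq} for the closed-loop adjoint equation) collapses the supremum in \eqref{NonnegativeSOC} to the displayed expression. To extend the inequality to the larger closure $\mathcal{C}_2^*(\bar z)$ taken in $W^{1,2}(0,T;D,H) \times L^2(Q)$, I exploit that $\varphi, e, f''(\bar y)$, and all second derivatives of $L$ and $g$ at $(\bar y, \bar u)$ are bounded: this makes the quadratic form in \eqref{SOC1} continuous on $L^2(Q) \times L^2(Q)$, which contains $W^{1,2}(0,T;D,H) \times L^2(Q)$ via \eqref{keyEmbed1}, so a density argument delivers (iv). I expect the verification of Robinson's condition and the reduction of the adjoint equation to an $L^\infty$-data linear problem to be the main technical obstacles.
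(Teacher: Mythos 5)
Your overall strategy coincides with the paper's: the same abstract program, the same choice $w=0$ and elimination of $u$ through $(H4)$, the same closed-loop adjoint equation solved by Lemma \ref{Lemma-LinearizedEq} after time reversal, and the same density/continuity argument for passing from $\mathcal{C}_0$ to $\mathcal{C}_2^*$ in (iv). There is, however, one genuine flaw, located exactly at what you call the technical heart: you take the codomain of the initial-condition component to be $V=H_0^1(\Omega)$. With that choice Robinson's condition demands, for \emph{every} $v_2\in H_0^1(\Omega)$, a pair $(y,u)\in Y\times U$ with $y(0)=v_2$ (the equality constraint contributes no cone to help). But every $y\in Y$ is bounded, and, as the paper checks when showing $F_2$ is well defined, $y\in Y$ forces $y(0)\in L^\infty(\Omega)\cap H_0^1(\Omega)$, which is a proper subspace of $H_0^1(\Omega)$ for $N=2,3$; so the linearization can never be surjective onto $V$ and $(A2)$ fails in your setup. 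Concretely, the step where you invoke Lemma \ref{Lemma-LinearizedEq} for arbitrary $v_2\in V$ is invalid, since that lemma requires initial data in $L^\infty(\Omega)\cap H_0^1(\Omega)$. The repair is precisely the paper's choice $W_2=L^\infty(\Omega)\cap H_0^1(\Omega)$ as target of $(y,u)\mapsto y(0)-y_0$; with that change your verification (choose $w=0$, set $u=(e_0-g_y[\cdot,\cdot]y)/g_u[\cdot,\cdot]$, solve the closed-loop linearized equation) is exactly the paper's and goes through, the only cost being that the corresponding multiplier lives in $\bigl(L^\infty(\Omega)\cap H_0^1(\Omega)\bigr)^*$ rather than $V^*$ — harmless, since it is ultimately identified with $\varphi(0)$.

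A secondary, fixable looseness: you propose to obtain the adjoint equation and the terminal condition by testing stationarity against $(h,0)$ and "integrating by parts", but at that stage the first multiplier is only known to lie in $L^{q}(0,T;H)$ with no time regularity, so that integration by parts is not justified. The paper reverses the order: it first \emph{defines} $\varphi$ as the $Y$-solution of the closed-loop backward equation with right-hand side $-L_y[\cdot,\cdot]+\frac{g_y[\cdot,\cdot]}{g_u[\cdot,\cdot]}L_u[\cdot,\cdot]$, integrates by parts against $y\in Y$ (where this is legitimate), and then uses the solvability of the linearized equation for arbitrary data $(\vartheta,\psi)\in L^p(0,T;H)\times\bigl(L^\infty(\Omega)\cap H_0^1(\Omega)\bigr)$ to conclude $\varphi=\phi_1$ and $\phi_2^*=\varphi(0)$. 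From there your remaining steps — upgrading $e=(\varphi-L_u[\cdot,\cdot])/g_u[\cdot,\cdot]$ to $L^\infty(Q)$, the pointwise complementarity (the paper passes from the integral normal-cone inequality to the pointwise one via \cite{Pales}), the uniqueness of $(\varphi,e)$ used to collapse the supremum in \eqref{NonnegativeSOC}, and the $L^2$-continuity of the quadratic form on the closure defining $\mathcal{C}_2^*$ — match the paper's proof.
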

\begin{proof}  Let us define Banach spaces
\begin{align*}
    &Z = Y \times U,\ W = W_1 \times W_2,\quad E = L^\infty(Q), \\
    &W_1 = L^p(0, T; H),\  W_2 =  L^\infty(\Omega) \cap H_0^1(\Omega) \quad {\rm with} \quad    p > \frac{4}{4 - N}. 
\end{align*}
Define mappings $F: Y\times U\to W$ and  $G: Y\times U\to E$ by setting
\begin{align}
     F(y, u) = (F_1(y, u),\   F_2(y, u)) = \bigg(\frac{\partial y}{\partial t} + Ay + f(y) - u,\ \   y(0) - y_0 \bigg)
\end{align} and 
\begin{align}   
     G(y, u) = g(\cdot, \cdot, y,  u).
\end{align}
By definition of space $Y$, if $(y, u) \in Y\times U$ then $\frac{\partial y}{\partial t} + Ay \in L^p(0, T; H),$ $f(y) \in L^\infty(Q) \subset L^p(0, T; H)$ (since $y \in L^\infty(Q)$) and $u \in L^\infty(Q) \subset L^p(0, T; H)$. Hence  
$$
\frac{\partial y}{\partial t} + Ay + f(y) - u \in L^p(0, T; H) = W_1
$$ and $F_1$ is well defined.  Take any  $y \in Y$. By \eqref{keyEmbed2}, we have $y\in C([0, T], V)$. Hence $y(0)\in H_0^1(\Omega)$. Since $y\in L^\infty(Q)$, there exists a constant $M>0$ such that $|y(t, x)|\leq M$ for a.a. $(t, x)\in Q$. Particularly, if $t_n\in [0, T]$ such that $t_n\to 0$, we have 
$|y(t_n, x)|\leq M$ for all $n$ and a.a. $x\in\Omega$ and 
$$
y(0, \cdot)=\lim_{n\to\infty} y(t_n, \cdot)\quad \text{in}\quad V.
$$ This implies that $\|y(0, \cdot)-y(t_n, \cdot)\|_{L^2(\Omega)}\to 0$ as $n\to \infty$. By passing to a subsequence, we may assume that $y(t_n, x)\to y(0, x)$ for a.a. $x\in\Omega$. It follows that 
\begin{align*}
 |y(0, x)|\leq |y(0, x)-y(t_n, x)| + |y(t_n, x)|\leq |y(0, x)-y(t_n, x)| + M.
\end{align*} By passing to the limit when $n\to \infty$, we get $|y(0, x)|\leq M$. Hence $y(0, \cdot)\in L^\infty(Q)$. Consequently, $y\in H_0^1(\Omega)\cap L^\infty(Q)$. Hence  $F_2$ is well defined and so is $F$. 

We now formulate problem \eqref{P1}-\eqref{P4} in the form
\begin{align}
    &J(y,u) \to {\rm min}, \\
    &{\rm s.t.} \nonumber \\
    &F(y,u) = 0,\\
    &G(y,u) \in K_\infty.
    \end{align}
We are going to apply Lemma \ref{Lemma-Optim}  for the problem \eqref{P1}-\eqref{P4} in order to derive necessary optimality conditions. 

\noindent \textbf{Step 1.} \textit{Verification of assumptions $(A1)$ and $(A2)$.} 

\noindent $\bullet$ Verification of $(A1)$. From $(H2)$ and $(H3)$ we see that the mappings $J, F$ and $G$ are of class $C^2$ around $\bar z  = (\bar y, \bar u)$. Hence, $(A1)$ is valid. Here $DJ(\bar z),  D^2J(\bar z), DF(\bar z),  D^2F(\bar z)$,  $DG(\bar z)$ and $D^2G(\bar z)$ are defined by 
\begin{align}
    &DJ(\bar z)(y, u) = \int_Q(L_y[x, t]y + L_u[x, t]u)dxdt, \\
    &D^2J(\bar z)(y, u)^2 = \int_Q(L_{yy}[x, t]y^2 + 2L_{yu}[x, t]yu + L_{uu}[x, t]u^2)dxdt, \label{A1.1} \\
    &DF(\bar z)(y, u) = (DF_1(\bar z)(y, u), DF_2(\bar z)(y, u)) \\
    & \ \ \ {\rm where} \ \ \ DF_1(\bar z)(y, u) = \dfrac{\partial y}{\partial t} + Ay + f'(\bar y)y - u \ \ \ {\rm and} \ \ \ DF_2(\bar z)(y, u) = y(0), \\
    &D^2F(\bar z)(y, u)^2 = \bigg(D^2F_1(\bar z)(y, u)^2, \ \ D^2F_2(\bar z)(y, u)^2\bigg) \label{A1.2} \\
    & \ \ \ {\rm where} \ \ \ D^2F_1(\bar z)(y, u)^2 = f''(\bar y)y^2 \ \ \ {\rm and} \ \ \ D^2F_2(\bar z)(y, u)^2 = 0,\\
    &DG(\bar z)(y, u) = g_y[x, t]y + g_u[x, t]u, \\
    &D^2G(\bar z)(y, u)^2 = g_{yy}[x, t]y^2 + 2g_{yu}[x, t]yu + g_{uu}[x, t]u^2,  \label{A1.3}
\end{align}
for all $(y, u) \in Z$.

\begin{comment}
    
\noindent $\bullet$ Showing that $DF(\bar z)$ is surjective. For this we shall show that $F_y(\bar z)$ is bijective. In fact, under assumptions $(H1)$ and $(H2)$, the linear mapping $F_y(\bar z): Y \to W$, defined by 
$$ F_y(\bar z)y = (F_{1y}(\bar z)y, F_{2y}(\bar z)y) = \bigg(\dfrac{\partial y}{\partial t} + Ay + f'(\bar y)y, \quad y(0) \bigg).
$$ Taking  any $v = (u, y_0) \in W$, $u \in L^p(0, T; H)$ and $y_0 \in L^\infty(\Omega) \cap H_0^1(\Omega)$, we consider equation $F_y(\bar z)y = v$. This equation is equivalent to
$$
\dfrac{\partial y}{\partial t} + Ay + f'(\bar y)y = u,  \quad y(0) = y_0. 
$$ Since $u\in L^p(0, T; H)$ with  $p > \frac{4}{4 - N}$, Lemma \ref{Lemma-LinearizedEq} implies that  the above parabolic equation has a unique solution $y \in Y$. Hence, $(A2)$ is valid. 
\end{comment}

\noindent $\bullet$ Verification of $(A2)$. For this we show that the Robinson constraint qualification \eqref{RobinsonCond2} is valid.  Take any $(\hat u, \hat y_0)\in W$ and $e\in E$, we want to show that there exists $(y, u)\in Y\times U$ and $w\in {\rm cone}(K-g[\cdot, \cdot])$ such that
\begin{align}\label{RobinEq0}
\begin{cases}
y_t + Ay + f'(\bar y) y - u= \hat u, \quad  y(0)=\hat y_0,\\
g_y[x,t] y + g_u[x,t]u - w= e.
\end{cases}
\end{align} For this we choose $w=0$ and consider the equation
\begin{align}
\begin{cases}
y_t +A y + f'(\bar y) y -\frac{e-g_y[\cdot, \cdot]y}{g_u[\cdot, \cdot]}=\hat u\\
y(0)=\hat y_0.
\end{cases}
\end{align} This is equivalent to the equation
\begin{align}\label{RobinsonEq}
\begin{cases}
y_t +A y + (f'(\bar y)+\frac{g_y[\cdot, \cdot]}{g_u[\cdot, \cdot]})y=\hat u +\frac{e}{g_u[\cdot, \cdot]}\\
y(0)=\hat y_0.
\end{cases}
\end{align} By $(H4)$, we have $\frac{e}{g_u[\cdot, \cdot]}\in L^\infty(Q)$, $f'(\bar y)+\frac{g_y[\cdot, \cdot]}{g_u[\cdot, \cdot]}\in L^\infty(Q)$ and  $\hat u+ \frac{e}{g_u[\cdot, \cdot]}\in L^p(0, T; H)$.  According to Lemma \ref{Lemma-LinearizedEq}, equation \eqref{RobinsonEq} has a unique solution $y\in Y$. 
We now define 
$$ 
u= \dfrac{e-g_y[\cdot, \cdot]y}{ g_u[\cdot, \cdot]}.
$$ Then we have $u \in U$, $g_y[\cdot, \cdot]y + g_u[\cdot, \cdot] u - 0 = e$ and  
\begin{align*}
y_t + Ay + f'(\bar y) y -u =\hat u,\quad y(0)=\hat y_0. 
\end{align*} Hence \eqref{RobinEq0} is fulfilled and the Robison constraint qualification is valid. 

\noindent \textbf{Step 2.} \textit{Deriving optimality conditions.} Let $q$ be a conjugate number of $p$ with $p > \frac{4}{4 - N}$, that is, $\frac{1}{p} + \frac{1}{q} = 1$. Since $H = L^2(\Omega)$, $H=H^*$. Hence dual of $L^p(0, T; H)$ is $L^q(0, T; H)$. We recall that $E^* = (L^\infty(Q))^*$ and $W^* = W_1^* \times W_2^*$, where $W_1^* = L^p(0, T; H)^* = L^q(0, T; H) \subset L^1(0, T; H)$ and $W_2^* = \big( L^\infty(\Omega) \cap H_0^1(\Omega) \big)^*$. In this case the Lagrange function which is associated with the problem is defined as follows
\begin{align*}
    &\mathcal{L}: Y \times U \times L^q(0, T; H) \times \big( L^\infty(\Omega) \cap H_0^1(\Omega) \big)^* \times (L^\infty(Q))^* \to  \mathbb R, \\
    &\mathcal{L}(y, u, \phi_1, \phi_2^*, e^*) = \int_QL(x, t, y, u)dxdt + \int_Q\bigg[\phi_1 (\frac{\partial y}{\partial t} + Ay + f(y) -u) \bigg]dxdt \\
    &\quad \quad \quad\quad \quad \quad \quad \quad + \left\langle \phi_2^*, y(0) - y_0 \right\rangle_{W_2^*, W_2} + \left\langle e^*, G(y, u)\right\rangle_{L^\infty(Q)^*, L^\infty(Q)}.
\end{align*}
As it was shown in Step 1, all assumptions of Lemma \ref{Lemma-Optim} are fulfilled. Therefore,  there exist $\phi_1 \in L^q(0, T; H)$, $\phi_2^* \in \big( L^\infty(\Omega) \cap H_0^1(\Omega) \big)^*$ and $e^* \in L^\infty(Q)^*$ such that the following conditions hold:
\begin{align}
    &e^* \in N(K_\infty, g[\cdot, \cdot]),  \label{dk1} \\
    &D_{(y, u)}{\mathcal L}(\bar z, \phi_1, \phi_2^*, e^*) = 0.  \label{dk2}
    \end{align}
The condition (\ref{dk2}) is equivalent to the two following relations 
\begin{align}\label{dk2.1}
    \int_QL_y[x, t]ydxdt &+ \int_Q\bigg[\phi_1 (\frac{\partial y}{\partial t} + Ay + f'(\bar y)y  \bigg]dxdt \nonumber \\
    &+ \left\langle \phi_2^*, y(0) \right\rangle_{W_2^*, W_2} + \left\langle e^*, g_y[x, t]y\right\rangle_{L^\infty(Q)^*, L^\infty(Q)} = 0, \quad \quad {\rm {for \ all}} \quad y \in Y
\end{align}
and
\begin{align}\label{dk2.2}
    \int_Q L_u[x, t]udxdt - \int_Q\phi_1udxdt + \left\langle e^*, g_u[x, t]u\right\rangle_{L^\infty(Q)^*, L^\infty(Q)} = 0, \quad \quad {\rm {for \ all}} \quad u \in U.
\end{align}
For each $y \in Y$, take $u: = \frac{g_y[\cdot, \cdot]}{g_u[\cdot, \cdot]}y$. Since $g_y[\cdot, \cdot] \in L^\infty(Q)$, $\frac{1}{g_u[\cdot, \cdot]}\in L^\infty (Q)$ and $y \in L^\infty(Q)$, we get $u \in L^\infty(Q) = U$. Then, we have
\begin{align*}
    &\left\langle e^*, g_u[x, t]u\right\rangle_{L^\infty(Q)^*, L^\infty(Q)} = \left\langle e^*, g_u[x, t]\frac{g_y[x, t]}{g_u[x, t]}y\right\rangle_{L^\infty(Q)^*, L^\infty(Q)} = \left\langle e^*, g_y[x, t]y\right\rangle_{L^\infty(Q)^*, L^\infty(Q)},
\end{align*}
and insert  $u = \frac{g_y[\cdot, \cdot]}{g_u[\cdot, \cdot]}y$ into (\ref{dk2.2}), we obtain 
\begin{align*}
    \left\langle e^*, g_y[x, t]y\right\rangle_{L^\infty(Q)^*, L^\infty(Q)} = - \int_Q L_u[x, t]\frac{g_y[x, t]}{g_u[x, t]}ydxdt + \int_Q\phi_1\frac{g_y[x, t]}{g_u[x, t]}ydxdt
\end{align*}
Combining this with (\ref{dk2.1}), we get
\begin{align}\label{dk2.1-MODIFY}
\int_Q\bigg( \frac{\partial y}{\partial t}  + Ay  + h(y) \bigg)\phi_1 dxdt  &+ \left\langle \phi_2^*, y(0) \right\rangle_{W_2^*, W_2}  \nonumber \\ 
&= \int_Q\bigg(-L_y[x, t] + \frac{g_y[x, t]}{g_u[x, t]}L_u[x, t] \bigg)ydxdt\ \ \quad  \forall y\in Y,  
\end{align}
 where the function $h: \Omega \times [0, T]\times\mathbb R \to \mathbb R$ is defined by setting 
\begin{align}\label{h-function}
h(x, t, y) =\bigg(f'(\bar y(x, t)) +\frac{g_y[x, t]}{g_u[x, t]}\bigg)y. 
\end{align}
Let us claim that $e^*$ can be represented by a density in $L^q(0, T; H)$.  In fact, from \eqref{dk2.2}, we have 
\begin{align}\label{StationaryCondU}
g_u[\cdot, \cdot]e^*=- L_u[\cdot, \cdot] +\phi_1.
\end{align}
 Hence 
$$
e^*=\frac{\phi_1-L_u[\cdot, \cdot]}{g_u[\cdot, \cdot]}\quad {\rm on}\quad L^\infty(Q).
$$  
From $(H4)$ and $\phi_1 \in L^q(0, T; H)$, $L_u[\cdot, \cdot] \in L^\infty(Q)$, we get $\frac{\phi_1-L_u[\cdot, \cdot]}{g_u[\cdot, \cdot]}\in L^q(0, T; H)$. Hence we can identify $e^*$ with a function $e\in L^q(0, T; H)$. The claim is justified.  Then from \eqref{dk1}, we have 
$$
\int_Q e(x,t)(v- g[x, t]) dxdt\leq 0\quad \forall v\in K_\infty. 
$$ By \cite[Corollary 4]{Pales}, we have 
$$
e(x,t)\in N((-\infty, 0]; g[x,t])\quad \text{a.a.}\ (x,t)\in Q. 
$$ This implies $e(x, t)g[x,t]=0$ and $e(x,t)\geq 0$ a.a. $(x, t)\in Q$. We obtain assertion $(iii)$ of the theorem. 

We now rewrite \eqref{dk2.1} and \eqref{dk2.2} in  the following equivalent forms:
\begin{align}
    \label{dk2.1.a}
    \int_QL_y[x, t]ydxdt + &\int_Q\bigg[\phi_1 (\frac{\partial y}{\partial t} + Ay + f'(\bar y)y)  \bigg]dxdt \nonumber \\
    &+ \langle \phi_2^*, y(0) \rangle_{W_2^*, W_2} + \int_Q eg_y[x, t]ydxdt = 0, \quad \quad {\rm {for \ all}} \quad y \in Y.
\end{align}
Next we derive the assertion $(ii)$. For this, we consider the following parabolic equation
\begin{align}
\label{Eq.Varphi.1}
    - \frac{\partial \varphi}{\partial t}  + A^* \varphi + h(\varphi)  = -L_y[x, t] + \frac{g_y[x, t]}{g_u[x, t]}L_u[x, t], \quad \quad \varphi(., T) = 0.
\end{align}
where the function $h$ is  defined by \eqref{h-function}. By changing variable $\widetilde \varphi(t)  = \varphi(T-t)$, the equation (\ref{Eq.Varphi.1}) becomes
\begin{align}
    \label{Eq.Varphi.2}
    &\frac{\partial \widetilde \varphi (t)}{\partial t} + A^* \widetilde \varphi (t) + \bigg[ f'(\bar y (x, T - t)) + \frac{g_y[x, T-t]}{g_u[x, T-t]}  \bigg] \widetilde \varphi (t) = -L_y[x, T-t] + \frac{g_y[x, T-t]}{g_u[x, T-t]}L_u[x, T-t], \nonumber \\ 
    &\widetilde \varphi(., 0) = 0.
\end{align}
It is easy to show that  $L_y[\cdot, \cdot ], L_u[\cdot, \cdot ], g_y[\cdot, \cdot ] \in L^\infty(Q)$. By $(H_4)$, we have that $-L_y[., .] + \frac{g_y[., .]}{g_u[., .]}L_u[., .] \in L^\infty(Q) \subset L^p(0, T; H)$ with  $p > \frac{4}{4 - N}$.  By Lemma \ref{Lemma-LinearizedEq},  equation (\ref{Eq.Varphi.2}) has a unique solution $\widetilde \varphi \in  W^{1, 2}(0, T; D, H) \cap L^\infty(Q)$. Hence $\varphi(t) = \widetilde \varphi(T - t)$ is the unique solution to the equation (\ref{Eq.Varphi.1}) in $W^{1, 2}(0, T; D, H) \cap L^\infty(Q)$.

We now multiply both sides of equation (\ref{Eq.Varphi.1}) by $y \in Y$ and integrate on $Q$. Then we have
\begin{align}
    \label{Eq.Varphi.3}
    \int_Q\bigg( - \frac{\partial \varphi}{\partial t}  + A^* \varphi + h(\varphi) \bigg)ydxdt = \int_Q\bigg(-L_y[x, t] + \frac{g_y[x, t]}{g_u[x, t]}L_u[x, t] \bigg)ydxdt \quad  {\rm for \ all } \ \ y \in Y.
\end{align}
Besides, we have
\begin{align*}
    &\int_Q (A^*\varphi) ydxdt = \int_0^T \left \langle A^* \varphi, y(t) \right \rangle_{H, H} dt = \int_0^T \left \langle Ay, \varphi \right \rangle_{H, H} dt = \int_Q (Ay) \varphi dxdt, \\
    &\int_Q h(\varphi)y dxdt = \int_Q h(y) \varphi dxdt.
\end{align*}
Using the fact $\varphi (x, T) = 0$ on $\Omega$ and  integrating by parts, we obtain
\begin{align*}
    - \int_Q \frac{\partial \varphi}{\partial t} y dxdt = - \int_0^T \left \langle \frac{\partial \varphi}{\partial t},  y \right \rangle_{H, H} dt = \int_\Omega \varphi (0)y(0) dx + \int_Q \frac{\partial y}{\partial t} \varphi dxdt.
\end{align*}
Hence, the equation (\ref{Eq.Varphi.3}) becomes
\begin{align}
    \label{Eq.Varphi.4}
    \int_Q\bigg( \frac{\partial y}{\partial t}  + Ay  + h(y) \bigg)\varphi dxdt + \int_\Omega \varphi (0)y(0) dx = \int_Q\bigg(-L_y[x, t] + \frac{g_y[x, t]}{g_u[x, t]}L_u[x, t] \bigg)ydxdt, 
\end{align}
for all $y \in Y$. Subtracting (\ref{dk2.1-MODIFY}) from (\ref{Eq.Varphi.4}), we get 
\begin{align}
\label{Eq.Varphi.5}
    \int_Q (\varphi - \phi_1) \bigg( \frac{\partial y}{\partial t}  + Ay  + h(y) \bigg) dxdt  +  \bigg(\int_\Omega \varphi (0)y(0) dx  - \left\langle \phi_2^*, y(0) \right\rangle_{W_2^*, W_2}\bigg) = 0, \quad {\rm for \ all} \ y \in Y.
\end{align}
On the other hand, as it was shown that for every $\vartheta \in L^p(0, T; H)$ and $\psi \in  L^\infty(\Omega) \cap H_0^1(\Omega)$, the parabolic equation
\begin{align*}
    \frac{\partial y}{\partial t}  + Ay  + h(y)  = \vartheta, \quad \quad y(0) = \psi 
\end{align*}
has a unique solution $y \in Y$. From this and \eqref{Eq.Varphi.5}, we deduce that 
\begin{align*}
    &\int_Q (\varphi - \phi_1) \vartheta dxdt  =  0 \quad \quad {\rm for \ all} \  \vartheta \in L^p(0, T; H),  \\
    &\left\langle \phi_2^*, \psi \right\rangle_{W_2^*, W_2} = \int_\Omega \varphi (0)\psi dx  \quad \quad {\rm for \ all} \   \psi \in  L^\infty(\Omega) \cap H_0^1(\Omega).
\end{align*} It follow that 
\begin{align*}
    \varphi = \phi_1\  {\rm in}\ Q, \quad   \phi_2^*=\varphi (0)\   {\rm in}\ \Omega.
\end{align*}
Consequently, \eqref{StationaryCondU} becomes
$$
L_u[\cdot, \cdot]-\varphi + e g_u[\cdot, \cdot]=0.
$$ Since $L_u[\cdot, \cdot], \varphi \in L^\infty(Q)$ and $\frac{1}{g_u[\cdot, \cdot]}  \in L^\infty(Q)$, we see that 
\begin{align} \label{Eq.e.1}
    e = \frac{1}{g_u[., .]}\bigg(\varphi - L_u[., .] \bigg) \in L^\infty(Q).
\end{align}  Therefore,  assertion $(ii)$ is proved. Inserting \eqref{Eq.e.1} into  equation \eqref{Eq.Varphi.1}, we obtain the assertion $(i)$ of the theorem.  Note that the Lagrange function now becomes
\begin{align}\label{LagrangeFunction2}
& \mathcal{L}(y, u, \varphi, e)  \nonumber \\
 &=J(y, u) +\int_Q \big(y(-\varphi_t +A^*\varphi) +(f(y)-u)\varphi + eg(x,t, y, u)\big)dxdt  -  \int_\Omega \varphi(0) y_0 dx.   
\end{align}
Let us show that  $(\varphi, e)$ is unique. In fact, assume that there are two couples $(\varphi_1, e_1)$ and $(\varphi_2, e_2)$ satisfying assertions $(i)-(iii)$. Then we have 
$$
-\frac{\partial \varphi_i}{\partial t} + A^* \varphi_i + f'(\bar y)\varphi_i=-L_y[\cdot, \cdot]-e_i g_y[\cdot, \cdot],\ \varphi_i(T)=0
$$ and 
$$
L_u[\cdot, \cdot] -\varphi_i + g_u[\cdot, \cdot]e_i =0 \quad {\rm with}\ i=1,2.
$$  By subtracting equations, we get
$$
-\frac{\partial }{\partial t}(\varphi_1-\varphi_2) + A^*(\varphi_1-\varphi_2) + f'(\bar y)(\varphi_1-\varphi_2)=-(e_1-e_2) g_y[\cdot, \cdot],\ (\varphi_1-\varphi_2)(T)=0
$$ and 
$$
-(\varphi_1-\varphi_2) + g_u[\cdot, \cdot](e_1-e_2)=0.
$$ It follows that 
$$
e_1-e_2=\frac{\varphi_1-\varphi_2}{g_u[\cdot, \cdot]}
$$ and 
$$
-\frac{\partial }{\partial t}(\varphi_1-\varphi_2) + A^*(\varphi_1-\varphi_2) + (f'(\bar y)+\frac{g_y[\cdot, \cdot]}{g_u[\cdot, \cdot]})(\varphi_1-\varphi_2)=0,\ (\varphi_1-\varphi_2)(T)=0.
$$  By defining $\tilde\varphi(t)=(\varphi_1-\varphi_2)(T-t)$, we have 
$$
\frac{\partial }{\partial t} \tilde\varphi + A^*\tilde\varphi + (f'(\bar y(\cdot, T-t))+\frac{g_y[\cdot, T-t]}{g_u[\cdot, T-t]})\tilde\varphi = 0,\quad \tilde\varphi(0)=0. 
$$ Taking the scalar product both sides with $\tilde\varphi$ and using $(H1)$, we get
$$
\frac{1}{2}\frac{\partial}{\partial t}|\tilde\varphi|^2 +\alpha|\nabla \tilde\varphi|^2 +\int_\Omega (f'(\bar y(\cdot, T-t))+\frac{g_y[\cdot, T-t]}{g_u[\cdot, T-t]})\tilde\varphi^2 dx \leq 0.
$$  Since $f'(\bar y) +\frac{g_y[\cdot, \cdot]}{g_u[\cdot, \cdot]}\in L^\infty(Q)$, there exists a positive number $\varrho>0$ such that 
$$
|f'(\bar y(x,t)) +\frac{g_y[x, t]}{g_u[x, t]}|\leq \varrho\quad {\rm a.a.}\ (x, t)\in Q.
$$ It follows that 
$$
\frac{1}{2}\frac{\partial}{\partial t}|\tilde\varphi|^2 +\alpha|\nabla \tilde\varphi|^2 -\varrho|\tilde\varphi|^2 \leq 0.
$$ This implies that 
$$
\frac{1}{2}\frac{\partial}{\partial t}|\tilde\varphi|^2 \leq \varrho|\tilde\varphi|^2. 
$$ The Gronwall inequality implies that $|\tilde\varphi|^2=0$. Hence  $\varphi_1=\varphi_2$ and $e_1=e_2$. Thus $(\varphi, e)$ is unique. By Lemma \ref{Lemma-Optim}, we have 
\begin{align*}
        &D_{(y, u)}^2\mathcal{L}(\bar y, \bar u, \varphi, e)[(y, v), (y, v)]=\int_Q (L_{yy}[x, t]y^2 + 2L_{yu}[x, t]yv + L_{uu}[x,t]v^2)dxdt \\
        &+ \int_Q e(g_{yy}[x, t]y^2 + 2g_{yu}[x, t]yv + g_{uu}[x, t]v^2)dxdt + \int_Q \varphi f''(\bar y)y^2dxdt \geq 0\quad \forall (y, v) \in \mathcal C_0[(\bar y, \bar u)].
    \end{align*} Let us take any $(y, v)\in C_2^*[(\bar y, \bar u)]$. Then there exists a sequence $(y_k, v_k)\in \mathcal{C}_0[(\bar y, \bar u)]$ such that $(y_k, v_k)\to (y, v)$ in norm of  $W^{1,2}(0, T; D, H)\times L^2(Q)$. By the above, we have 
\begin{align}
    &\int_Q (L_{yy}[x, t]y_k^2 + 2L_{yu}[x, t]y_kv_k + L_{uu}[x,t]v^2)dxdt \nonumber \\
    &+ \int_Q e(g_{yy}[x, t]y_k^2 + 2g_{yu}[x, t]y_kv_k + g_{uu}[x,t]v_k^2)dxdt + \int_Q \varphi f''(\bar y)y_k^2dxdt \geq 0. \label{NonnegativeCond2}
\end{align} Since $y_k\to y$ strongly in $W^{1,2}(0, T; D, H)$, we have $y_k\to y$ strongly in $L^2(0, T; H)=L^2(Q).$ Letting $k\to\infty$, we obtain from \eqref{NonnegativeCond2} that 
\begin{align*}
        &D_{(y, u)}^2\mathcal{L}(\bar y, \bar u, \varphi, e)[(y, v), (y, v)]=\int_Q (L_{yy}[x, t]y^2 + 2L_{yu}[x, t]yv + L_{uu}[x,t]v^2)dxdt \nonumber \\
        &+ \int_Q e(g_{yy}[x, t]y^2 + 2g_{yu}[x, t]yv + g_{uu}[x,t]v^2)dxdt + \int_Q \varphi f''(\bar y)y^2dxdt \geq 0.
    \end{align*} Hence The assertion $(iv)$ is established and the proof of the theorem is complete. 
\end{proof}

To deal with second-order sufficient optimality conditions, we need to enlarge the critical cone $\mathcal{C}_2^*[\bar z]$ by the cone $\mathcal{C}_2[\bar z]$ which consists  of couples $(y, u) \in W^{1, 2}(0, T; D, H) \times L^2(Q)$ satisfying the following conditions:
\begin{itemize}
\item [$(c'_1)$] $\displaystyle \int_Q (L_y[x, t]y(x, t) + L_u[x, t]u(x, t))dxdt \le 0$; 

\item [$(c'_2)$] $\dfrac{\partial y}{\partial t} + Ay + f'(\bar y)y = u, y(0) = 0;$ 

\item[$(c'_3)$] $g_y[x, t]y(x, t) + g_u[x, t]u(x, t) \in T((-\infty, 0], g[x,t])$ for a.a. $(x, t) \in Q$.
\end{itemize}

The following theorem gives second-order sufficient conditions for locally optimal solutions to problem \eqref{P1}--\eqref{P4}. 

\begin{theorem} \label{Theorem 2} Suppose that $(H1), (H2), (H3)$ and $(H4)$ are valid, there exist the multipliers $\varphi \in W^{1, 2}(0, T; D, H) \cap L^\infty(Q)$ and $e \in L^\infty(Q)$ corresponding to $(\bar y, \bar u)\in\Phi$ satisfies conclusions $(i), (ii)$ and $(iii)$ of Theorem \ref{Theorem 1} and the following strictly second-order condition:
 \begin{align}\label{StrictSOSCond}
D^2_{(y,u)}\mathcal{L}(\bar y, \bar u, \varphi, e)[(y, v), (y, v)]>0\quad \forall (y, v)\in\mathcal{C}_2[(\bar y, \bar u)]\setminus\{(0,0)\}.
\end{align}  
Furthermore, there exists a number $\Lambda > 0$ such that 
\begin{align}\label{CoerciveCond}
    L_{uu}[x, t] + e(x, t)g_{uu}[x, t] \geq \Lambda \quad {\rm a.e.} \ (x, t) \in Q.
\end{align}
Then there exist number $\epsilon>0$ and $\kappa > 0$ such that 
\begin{align}
    J(y, u)\geq J(\bar y, \bar u) +\kappa \|u-\bar u\|_{L^2(Q)}^2\quad \forall (y, u)\in\Phi\cap[B_Y(\bar y, \epsilon)\times B_U(\bar u, \epsilon)], 
\end{align} where $B_Y(\bar y, \epsilon)$ and $B_U(\bar u, \epsilon)$ are  balls in $Y$ and $L^\infty(Q)$, respectively.
\end{theorem}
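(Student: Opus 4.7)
I would prove this by contradiction. Suppose the quadratic growth property fails: then there is a sequence $(y_k, u_k) \in \Phi$ with $(y_k, u_k) \to (\bar y, \bar u)$ in $Y \times U$ and
\[
J(y_k, u_k) < J(\bar y, \bar u) + \tfrac{1}{k}\,\rho_k^2, \qquad \rho_k := \|u_k - \bar u\|_{L^2(Q)}.
\]
One may assume $\rho_k > 0$, since $u_k = \bar u$ together with Lemma~\ref{Lemma-stateEq} forces $y_k = \bar y$. Setting $v_k := (u_k - \bar u)/\rho_k$ and $w_k := (y_k - \bar y)/\rho_k$, one has $\|v_k\|_{L^2(Q)} = 1$. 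A linearization of the state equation (exploiting $(H2)$ and $y_k \to \bar y$ in $L^\infty(Q)$) shows that $w_k$ solves $\partial_t w_k + Aw_k + a_k w_k = v_k$, $w_k(0) = 0$, with $a_k \to f'(\bar y)$ in $L^\infty(Q)$. Energy estimates together with the compact embedding $W^{1,2}(0,T;D,H) \hookrightarrow L^2(Q)$ let me extract a subsequence such that $v_k \rightharpoonup v$ in $L^2(Q)$ and $w_k \to y$ strongly in $L^2(Q)$ (weakly in $W^{1,2}(0,T;D,H)$), where $(y,v)$ satisfies condition $(c'_2)$.

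\textbf{The limit belongs to $\mathcal{C}_2[(\bar y, \bar u)]$ and a second-order inequality on $\mathcal{L}$.} A first-order Taylor expansion of $J$, divided by $\rho_k$ and passed to the limit using the contradiction hypothesis, yields $(c'_1)$. For $(c'_3)$, a pointwise Taylor expansion of $g(x,t,y_k,u_k) \leq 0$, divided by $\rho_k$, gives $g_y[\cdot,\cdot]\,w_k + g_u[\cdot,\cdot]\,v_k \leq o(1)$ on the active set $\{g[\cdot,\cdot] = 0\}$; since the corresponding convex cone in $L^2(Q)$ is weakly closed, the weak $L^2$-limit lies in $T((-\infty,0];\,g[\cdot,\cdot])$ a.e. Hence $(y,v) \in \mathcal{C}_2[(\bar y,\bar u)]$. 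Next, conditions (i)--(iii) and the definition \eqref{LagrangeFunction2} give $D_{(y,u)}\mathcal{L}(\bar y, \bar u, \varphi, e) = 0$ and $\mathcal{L}(\bar y, \bar u, \varphi, e) = J(\bar y, \bar u)$; moreover, since $e \geq 0$ and $g(\cdot,\cdot,y_k,u_k) \leq 0$, one has $\mathcal{L}(y_k, u_k, \varphi, e) \leq J(y_k, u_k)$. A second-order Taylor expansion of $\mathcal{L}$ combined with the contradiction hypothesis then yields
\[
\tfrac{1}{2}\,Q_k + r_k < \tfrac{\rho_k^2}{k}, \qquad Q_k := D^2_{(y,u)}\mathcal{L}(\bar y, \bar u, \varphi, e)[(y_k - \bar y,\, u_k - \bar u)^2],
\]
with $r_k = o(\rho_k^2)$ by the Lipschitz estimates in $(H3)$, the bound $\|y_k - \bar y\|_{L^2(Q)} \leq C\rho_k$ from Lemma~\ref{Lemma-stateEq}, and the convergence $(y_k, u_k) \to (\bar y, \bar u)$ in $L^\infty(Q)$.

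\textbf{The main obstacle and the finishing step.} The central difficulty is that $v_k \rightharpoonup v$ only weakly in $L^2(Q)$, so the purely quadratic term in $v_k$ does not pass continuously to the limit. I would split $Q_k/\rho_k^2$ into the $w_k^2$-, $w_kv_k$-, and $v_k^2$-pieces. The first two pieces converge to their limiting values by strong $L^2$-convergence of $w_k$ and boundedness of $v_k$. For the third, the decomposition $v_k = v + (v_k - v)$ together with coercivity \eqref{CoerciveCond} gives the key lower bound
\[
\liminf_{k \to \infty} \int_Q (L_{uu}[\cdot,\cdot] + e\,g_{uu}[\cdot,\cdot])\,v_k^2\,dxdt \,\geq\, \int_Q (L_{uu}[\cdot,\cdot] + e\,g_{uu}[\cdot,\cdot])\,v^2\,dxdt + \Lambda\,\bigl(1 - \|v\|_{L^2(Q)}^2\bigr),
\]
using $\|v_k - v\|_{L^2(Q)}^2 \to 1 - \|v\|_{L^2(Q)}^2$. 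Combining,
\[
\liminf_{k \to \infty} \tfrac{Q_k}{\rho_k^2} \,\geq\, D^2_{(y,u)}\mathcal{L}(\bar y, \bar u, \varphi, e)[(y,v)^2] + \Lambda\,\bigl(1 - \|v\|_{L^2(Q)}^2\bigr).
\]
If $(y,v) \neq (0,0)$, the strict second-order condition \eqref{StrictSOSCond} makes the first term positive; if $(y,v) = (0,0)$, then $v = 0$ and the second term equals $\Lambda > 0$. Either way the right-hand side is strictly positive, contradicting $\tfrac{1}{2} Q_k/\rho_k^2 < 1/k - o(1) \to 0$. The whole argument rests on this coercivity \eqref{CoerciveCond} to bridge the two-norm gap between weak $L^2$-convergence of the controls and the strong $L^\infty$-regularity required by $(H3)$.
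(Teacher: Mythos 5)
Your proposal is correct and follows essentially the same route as the paper: argue by contradiction, normalize by $\rho_k=\|u_k-\bar u\|_{L^2(Q)}$, pass to a weak limit $(y,v)$ and verify it lies in $\mathcal{C}_2[(\bar y,\bar u)]$ (the paper does $(c_3')$ via the weakly closed cone $T_{L^2}(K_2;G(\bar y,\bar u))$ and its pointwise characterization, which matches your active-set argument), then exploit $\mathcal{L}(y_k,u_k,\varphi,e)\le J(y_k,u_k)$, $D_{(y,u)}\mathcal{L}(\bar y,\bar u,\varphi,e)=0$ and a second-order expansion of $\mathcal{L}$, closing the two-norm gap with the coercivity \eqref{CoerciveCond}. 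The only difference is the finish: you fold everything into the single lower bound $\liminf_k Q_k/\rho_k^2\ge D^2_{(y,u)}\mathcal{L}(\bar y,\bar u,\varphi,e)[(y,v)^2]+\Lambda\bigl(1-\|v\|_{L^2}^2\bigr)$ via the splitting $v_k=v+(v_k-v)$, whereas the paper argues in two stages (first $( \zeta,v)=(0,0)$ from weak lower semicontinuity of the convex $u$-quadratic form plus the strict condition \eqref{StrictSOSCond}, then a second pass through the expansion using $\|v_n\|_{L^2}=1$ and \eqref{CoerciveCond2} to reach $0\ge\Lambda/2$); the two finishes are equivalent in substance, yours being marginally more streamlined.
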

\begin{proof} Suppose to the contrary that the conclusion were
false. Then, we could find sequences $\{(y_n, u_n)\}\subset \Phi$ and $\{\gamma_n\}\subset \mathbb{R}_+$ such that $y_n\to \bar y$ in $Y$, $u_n\to \bar u$ in $L^\infty(Q)$,  $\gamma_n\to 0^+$ and
\begin{equation}\label{Iq1}
J(y_n, u_n)< J(\bar y, \bar u) +\gamma_n\|u_n-\bar u\|_{L^2(Q)}^2. 
\end{equation} If $u_n=\bar u$ then by the uniqueness we have $y_n=\bar y$. This leads to  $J(\bar y, \bar u)< J(\bar y, \bar u) + 0$ which is absurd. Therefore, we can assume that $u_n\ne \bar u$ for  all $n\geq 1$.

Define $t_n=\|u_n- \bar u\|_{L^2(Q)}$, $\zeta_n= \frac{y_n-\bar y}{t_n}$ and $v_n= \frac{u_n-\bar u}{t_n}$. Then $t_n\to 0^+$ and $\|v_n \|_{L^2(Q)}=1$. By the reflexivity of  $L^2(Q)$, we may assume that $v_n \rightharpoonup v$ in $L^2(Q)$. From the above, we have
\begin{equation}\label{KeyIq2}
J(y_n, u_n)- J(\bar y, \bar u)\leq t_n^2 \gamma_n \leq o(t_n^2).
\end{equation} We want to show that $z_n$ converges weakly to some
$z$ in $W^{1,2}(0, T; D, H)$. Since $(y_n,
u_n)\in\Phi$ and $(\bar y, \bar u)\in\Phi$, we have
\begin{align*}
&\frac{\partial y_n}{\partial t}+ Ay_n +f(y_n) = u_n,\ y_n(0)=y_0\\
&\frac{\partial \bar y}{\partial t} +A\bar y + f(\bar y)=\bar u,\ \bar y_n(0)= y_0.
\end{align*} This implies that
\begin{align}\label{Zn}
     \frac{\partial (y_n-\bar y)}{\partial t}+ A(y_n-\bar y) +f(y_n)-f(\bar y) = u_n-\bar u,\ (y_n-\bar y)(0)=0
\end{align}
 By Taylor's expansion, we have 
$$
f(y_n)-f(\bar y)=f'(\bar y +\theta_n(y_n-\bar y)) (y_n-\bar y),\ 0\leq\theta_n(x, t)\leq 1. 
$$ Such a function $\theta_n$ does exist and it is measurable. Indeed, we consider the set-valued map $H_n: Q\to 2^{\mathbb{R}}$ by setting
$$
H_n(x, t)=\{\theta\in [0, 1]: f(y_n(x,t))-f(\bar y(x,t))=f'(\bar y(x,t) +\theta(y_n(x,t)-\bar y(x,t))(y_n(x,t)-\bar y(x,t))\}.
$$ By \cite[Theorem 8.2.9]{Aubin}, $H_n$ is measurable and has a measurable selection, that is, there exists a measurable mapping $\theta_n: Q\to [0,1]$ such that $\theta_n(x, t)\in H_n(x, t)$ for a.a. $(x, t)\in Q.$

Note that $y_n\to \bar y$ in $L^\infty(Q)$ so there exists $M>0$ such that $\|\bar y +\theta_n(y_n-\bar y)\|_{L^\infty(Q)}\leq M$. By $(H2)$, there exists $k_M>0$ such that 
$$
|f'(\bar y +\theta_n(y_n-\bar y))|\leq k_M|\bar y +\theta_n(y_n-\bar y))| +|f'(0)|. 
$$ Hence $f'(\bar y +\theta_n(y_n-\bar y))\in L^\infty(Q)$. From \eqref{Zn}, we get
\begin{align}\label{Zn2}
     \frac{\partial \zeta_n}{\partial t}+ A\zeta_n +f'(\bar y+\theta_n (y_n-\bar y)\zeta_n = v_n,\ \zeta_n(0)=0.
\end{align} By \cite[Theorem 5, p.360]{Evan}, there exists a constant $C>0$ such that 
$$
\|\zeta_n\|_{W^{1,2}(0,T; D, H)}\leq  C\|v_n\|_{L^2(Q)}=C.
$$ Hence $\{z_n\}$ is bounded in $W^{1,2}(0,T; D, H)$. Without loss of generality, we may assume that $\zeta_n\rightharpoonup \zeta$ in $W^{1,2}(0,T, D, H)$. By Aubin's lemma, the embedding $W^{1,2}(0, T; D, H)\hookrightarrow L^2([0, T], V)$ is compact. Therefore, we may assume that $\zeta_n\to \zeta$ in $L^2([0, T], V)$. Particularly, $\zeta_n\to \zeta$ in $L^2(Q)$. We now  use the procedure in the proof of \cite[Theorem 3, p. 356]{Evan}. By  passing to the limit,   we obtain from  \eqref{Zn2} that
\begin{align}\label{Z-Eq-C2}
     \frac{\partial z}{\partial t}+ A\zeta +f'(\bar y)\zeta = v,\ \zeta(0)=0.
\end{align}
 Next we claim that $(\zeta, v)\in\mathcal{C}_2[(\bar y, \bar u)]$. Indeed, by a Taylor expansion, we have from \eqref{KeyIq2} that
\begin{equation}\label{CriticalCon1}
\int_Q L(x,t, \bar y +\theta(y_n-\bar y), u_n)\zeta_n dxdy +\int_Q L_u(x, t, \bar y,  \bar u +\xi(u_n-\bar u)) v_n dxdt\leq \frac{o(t^2_n)}{t_n},
\end{equation}  where $0\leq \theta, \xi\leq 1$.  Since $y_n\to \bar y$  and $u_n\to \bar u$ in $L^\infty (Q)$, there exists $M>0$ such that 
$$
|u_n(x, t))|, |\bar y(x, t) +\theta(x,t)(y_n(x,t)-\bar y(x,t)| \leq M+ \|\bar y\|_{L^\infty(Q)} +\|\bar u\|_{L^\infty(Q)}. 
$$ By $(H3)$, there exists $k_M>0$ such that 
\begin{align*}
&|L_y(x, t, \bar y(x, t) +\theta(x,t)(y_n(x,t)-\bar y(x,t), u_n(x, t))-L_y(x, t, \bar y(x, t), \bar u(x, t))|\\
&\leq k_M(|y_n(x, t)-\bar y(x, t) |+ |u_n(x, t)-\bar u(x, t)|).
\end{align*} This implies that 
$$
\|L_y(\cdot, \cdot, \theta(y_n-\bar y), u_n)-L_y(\cdot, \cdot, \bar y, \bar u)\|_{L^\infty(Q)}\to 0\quad {\rm as}\quad n\to\infty
$$ Similarly, we have 
$$
\|L_y(\cdot, \cdot, \bar y, \xi(u_n-\bar u)-L_y(\cdot, \cdot, \bar y, \bar u)\|_{L^\infty(Q)}\to 0\quad {\rm as}\quad n\to\infty
$$  Letting $n\to\infty$  we deduce from \eqref{CriticalCon1} that
\begin{align}\label{c1}
\int_Q(L_y[x,t]\zeta(x,t)+ L_u[x,t]v(x,t))dxdt\leq 0. 
\end{align} Hence conditions $(c_1')$  and $(c_2')$ of  $\mathcal{C}_2[(\bar y, \bar u)]$ are valid.  It remains to verify  $(c_3')$. Let us define 
$$
G(y, u)(x, t) =g(x, t, y(x, t), u(x, t)). 
$$ Since $(y_n, u_n)\in\Phi$, we have $G(y_n, u_n) \in K_\infty$ for $n = 1, 2, ...$. Hence 
$$
G(y_n, u_n) - G(\bar y, \bar u) \in K_\infty - G(\bar y, \bar u)\subset K_2-G(\bar y, \bar u),
$$ where 
$$
K_2:=\{v\in L^2(Q): v(x, t)\leq 0\quad {\rm a.a.}\ (x,t)\in Q\}. 
$$
By the Taylor expansion and the definitions of $\zeta_n, v_n$, we have
\begin{align*}
    G_y(\bar y +\theta_1(y_n-\bar y), u_n)\zeta_n + G_u(\bar y, \bar u+\xi_1(u_n-\bar u))v_n & \in \frac{1}{t_n} \bigg( K_2 - G(\bar y, \bar u) \bigg) \\
    &\subseteq {\rm cone}(K_2 - G(\bar y, \bar u))  \\
    &\subseteq T_{L^2(Q)}(K_2; G(\bar y, \bar u))
\end{align*}
where $0\leq \theta_1, \xi_1\leq 1$, $T_{L^2(Q)}(K_2; G(\bar y, \bar u))$ is the tangent cone to $K_2$ at $G(\bar y, \bar u)$ in $L^2(Q)$. Since $T_{L^2}(K_2; G(\bar y, \bar u))$ is a closed convex subset of $L^2(Q)$, it is a weakly closed set of $L^2(Q)$ and so $T_{L^2}(K_2; G(\bar y, \bar u))$ is sequentially weakly closed. By a simple argument, we can show that $G_y(\bar y +\theta_1(y_n-\bar y), u_n)\zeta_n + G_u(\bar y, \bar u+\xi_1(u_n-\bar u))v_n$ converges weakly to    $G_y(\bar y, \bar u)\zeta + G_u(\bar y, \bar u)v$ in $L^2(Q)$. Letting $n\to\infty$, we obtain from the above that 
\begin{align}
    G_y(\bar y, \bar u)\zeta + G_u(\bar y, \bar u)v \in T_{L^2}(K_2; G(\bar y, \bar u)).
\end{align} By Lemma 2.4 in \cite{Kien-2017}, we have 
$$
T_{L^2}(K_2; G(\bar y, \bar u))=\{ v\in L^2(Q): v(x, t)\in T((-\infty, 0]; g[x,t])\quad {\rm a.a.}\ (x, t)\in Q\}. 
$$ It follows that 
$$
g_y[x,t]\zeta(x,t) + g_u[x,t]v(x,t)\in T((-\infty, 0]; g[x,t])\quad {\rm a.a.}\ (x, t)\in Q. 
$$ Hence $(c_3')$ is satisfied. Consequently, $(\zeta,v)\in\mathcal{C}_2[(\bar y, \bar u)]$ and the claim is justified. 

Let us claim  that $(\zeta, v)=0$.  In fact, from the assertion $(iii)$ of Theorem {\ref{Theorem 1}}, we have $\langle e, G(y_n, u_n) - G(\bar y, \bar u)\rangle \leq 0$.  By \eqref{LagrangeFunction2} and  \eqref{Zn}, we have 
\begin{align}
&\mathcal{L}(y_n, u_n, \varphi, e)-\mathcal{L}(\bar y,\bar u, \varphi, e)=J(y_n, u_n)-J(\bar y, \bar u)  \notag\\ 
&+\int_Q [(y_n-\bar y)[-\frac{\partial \varphi}{\partial t} +A^*\varphi] +\varphi (f(y_n)-f(\bar y)) -\varphi(u_n-\bar u)  + e(G(y_n, u_n) - G(\bar y, \bar u))]dxdt\label{LagrangeFunction3}\\
&=J(y_n, u_n)-J(\bar y, \bar u)\notag \\  
&+\int_Q[\varphi(t)\big(\frac{\partial (y_n-y)}{\partial t} +A(y_n-\bar y) +(f(y_n)-f(\bar y)- (u_n-\bar u)\big)+ e(G(y_n, u_n) - G(\bar y, \bar u))] dxdt\notag\\
&=J(y_n, u_n)-J(\bar y, \bar u) + \int_Q  e(G(y_n, u_n) - G(\bar y, \bar u)) dxdt\notag \\
&\leq J(y_n, u_n)-J(\bar y, \bar u)\leq o(t_n^2).\notag
\end{align} Using   a second-order Taylor expansion and  the equality  $D_{(y,u)}\mathcal{L}(\bar y, \bar u, \varphi, e)=0$, we have from  \eqref{LagrangeFunction3} that
\begin{align*}
&o(t_n^2)\geq \mathcal{L}(y_n, u_n, \varphi, e)-\mathcal{L}(\bar y,\bar u, \varphi, e)=\\
&\int_Q \big(t_nL_y[x,t]\zeta_n + t_n^2L_{yu}(x,t, \bar y, \bar u+ \theta_2(u_n-\bar u))\zeta_n v_n +\frac{t_n^2}2L_{yy}(x,t,\bar y+\theta_3(y_n-\bar y), u_n)\zeta_n^2\big)dxdt\\ 
&+\int_Q\big(t_nL_u[x, t] v_n+ \frac{t_n^2}2L_{uu}(x, t, \bar y, \bar u +\xi_2(u_n-\bar u)) v_n^2\big)dxdt \\
&+\int_Q t_n(-\varphi_t +A^*\varphi +f'(\bar y)\varphi) \zeta_n +\frac{t_n^2}{2} \varphi f(\bar y +\theta_4(y_n-\bar y)) \zeta_n^2 + t_n \varphi v_n \\
&+ \int_Q e\big(t_ng_y[x,t]\zeta_n + t_n^2g_{yu}(x,t, \bar y, \bar u+ \theta_5(u_n-\bar u))\zeta_n v_n +\frac{t_n^2}2g_{yy}(x,t,\bar y+\theta_6(y_n-\bar y), u_n)\zeta_n^2\big)dxdt\\ 
&+\int_Qe\big(t_ng_u[x, t] v_n+ \frac{t_n^2}2g_{uu}(x, t, \bar y, \bar u +\xi_3(u_n-\bar u)) v_n^2\big)dxdt \\
&=\frac{t_n^2}2\int_Q \big(2L_{yu}(x,t, \bar y, \bar u+ \theta_2(u_n-\bar u))\zeta_n v_n +L_{yy}(x,t,\bar y+\theta_3(y_n-\bar y), u_n)\zeta_n^2\big)dxdt\\
&+\frac{t_n^2}2\int_Q e\big(2g_{yu}(x,t, \bar y, \bar u+ \theta_5(u_n-\bar u))\zeta_n v_n+ g_{yy}(x,t,\bar y+\theta_6(y_n-\bar y), u_n)\zeta_n^2\big)dxdt\\ 
&+\frac{t_n^2}2\int_Q \big(L_{uu}(x, t, \bar y, \bar u +\xi_2(u_n-\bar u)) + eg_{uu}(x, t, \bar y, \bar u +\xi_3(u_n-\bar u)) v_n^2\big) v_n^2dxdt,
\end{align*} where $0\leq \theta_i, \xi_j\leq 1$. It follows that 
\begin{align}
\frac{o(t_n^2)}{t_n^2}&\geq \int_Q \big(2L_{yu}(x,t, \bar y, \bar u+ \theta_2(u_n-\bar u))\zeta_n v_n +L_{yy}(x,t,\bar y+\theta_3(y_n-\bar y), u_n)\zeta_n^2\big)dxdt\notag\\
&+\int_Q e\big(2g_{yu}(x,t, \bar y, \bar u+ \theta_5(u_n-\bar u))\zeta_n v_n+ g_{yy}(x,t,\bar y+\theta_6(y_n-\bar y), u_n)\zeta_n^2\big)dxdt\notag\\ 
&+\int_Q \big(L_{uu}(x, t, \bar y, \bar u +\xi_2(u_n-\bar u)) + eg_{uu}(x, t, \bar y, \bar u +\xi_3(u_n-\bar u))\big) v_n^2dxdt.\label{NonnegativeCond4}
\end{align} Since $\|u_n- \bar u\|_{L^\infty(Q)}\to 0$, $\|y_n- \bar y\|_{L^\infty(Q)}\to 0$ and $(H3)$, we get   
$$
\|L_{uu}(\cdot,\cdot, \bar y, \bar u +\xi_2(u_n-\bar u)) + eg_{uu}(\cdot, \cdot, \bar y, \bar u +\xi_3(u_n-\bar u)- (L_{uu}[\cdot, \cdot] + e g_{uu}[\cdot, \cdot])\|_{L^\infty(Q)}\to 0.
$$ From this and  \eqref{CoerciveCond}, we see that there exists $n_0>0$ such that 
\begin{align}\label{CoerciveCond2}
 L_{uu}(x,t, \bar y, \bar u +\xi_2(u_n-\bar u)) + eg_{uu}(x, t, \bar y, \bar u +\xi_3(u_n-\bar u)> \frac{\Lambda}2   
\end{align} for all $n>n_0$ and a.a. $(x, t)\in Q$. Also, by \eqref{CoerciveCond}, the  functional 
\begin{align}
w \mapsto  \int_Q \big(L_{uu}[x,t] + eg_{uu}[x, t])\big) w^2dxdt
\end{align} is convex and so it is sequentially lower semicontinous. Therefore, we have 
\begin{align*}
&\lim_{n\to\infty}\int_Q\big(L_{uu}(x,t, \bar y, \bar u +\xi_2(u_n-\bar u)) + eg_{uu}(x, t, \bar y, \bar u +\xi_3(u_n-\bar u))\big)v_n^2dxdt\\
&=\lim_{n\to\infty}\int_Q\big(L_{uu}(x,t, \bar y, \bar u +\xi_2(u_n-\bar u)) + eg_{uu}(x, t, \bar y, \bar u +\xi_3(u_n-\bar u)- L_{uu}[x,t] -e g_{uu}[x,t])\big)v_n^2 dxdt\\
&+\lim_{n\to\infty}\int_Q\big( L_{uu}[x,t] + eg_{uu}[x,t]\big) v_n^2 dxdt\geq \int_Q (L_{uu}[x,t] + eg_{uu}[x, t]) v^2 dxdt. 
\end{align*} From this and letting $n\to\infty$, we obtain from \eqref{NonnegativeCond4} that 
$$
0\geq D^2_{(y,u)}\mathcal{L}(\bar y,\bar u, \varphi, e)[(\zeta, v), (\zeta, v)].
$$ Combining this with \eqref{StrictSOSCond}, we conclude that $(\zeta, v)=(0, 0)$.

Finally, from \eqref{NonnegativeCond4}, $\|v_n\|_{L^2(Q)}=1$ and \eqref{CoerciveCond2}, we have 
\begin{align*}
\frac{o(t_n^2)}{t_n^2}&\geq \int_Q \big(2L_{yu}(x,t, \bar y, \bar u+ \theta_2(u_n-\bar u))\zeta_n v_n +L_{yy}(x,t,\bar y+\theta_3(y_n-\bar y), u_n)\zeta_n^2\big)dxdt\\
&+\int_Q e\big(2g_{yu}(x,t, \bar y, \bar u+ \theta_5(u_n-\bar u))\zeta_n v_n+ g_{yy}(x,t,\bar y+\theta_6(y_n-\bar y), u_n)\zeta_n^2\big)dxdt +\frac{\Lambda}2. 
\end{align*} 
By letting $n\to\infty$ and using $(\zeta, v)=(0, 0)$, we get $0\geq \frac{\Lambda}2$ which is impossible. Therefore, the proof of  theorem is complete.   
\end{proof}

\section{H\"{o}lder continuity of multipliers and optimal solutions}

In this section we show that under stronger hypotheses on $y_0$, $L, g$ and $\partial\Omega$, then the multipliers and optimal solution are H\"{o}lder continuous. Namely, we require that $\phi$ is locally Lipschitz in all variable.  

\noindent $(H3')$ $\phi$ is a continuous  function and  for each $(x,t)\in \Omega\times [0, T]$, $\phi(x, t, \cdot, \cdot)$ is of class $C^2$ and satisfies the following property:  for each $M>0$, there exists $k_{\psi,M}>0$ such that 
\begin{align*}
    &|\phi(x_1, t_1, y_1, u_1)-\phi(x_2, t_2, y_2, u_2)|+|\phi_y(x_1, t_1, y_1, u_1)-\phi_y(x_2, t_2, y_2, u_2)|\\
    &+|\phi_u(x_1, t_1, y_1, u_1)-\phi_u(x_2, t_2, y_2, u_2)| \leq k_{\phi, M} (|x_1-x_2|+ |t_1-t_2|+ |y_1-y_2|+ |u_1-u_2|),\\
    &{\rm and} \\
    &|\phi_{yy}(x, t, y_1, u_1)-\phi_{yy}(x, t, y_2, u_2)|+|\phi_{yu}(x, t, y_1, u_1)-\phi_{yu}(x, t, y_2, u_2)|\\
    &+ |\phi_{uu}(x, t, y_1, u_1)-\phi_{uu}(x, t, y_2, u_2)|\leq k_{\phi,M}(|y_1-y_2|+ |u_1-u_2|)
\end{align*}  for all $(x, t), (x_i, t_i)\in Q$ and $(y_i, u_i)\in \mathbb{R}\times\mathbb{R}$ satisfying $|y_i|, |u_i|\leq M$ with $i=1,2$.

\noindent $(H4')$  There exist nunmbers $\gamma_1>0$ and $\gamma_2>0$ such that 
\begin{align}
& L_{uu}(x, t, y, u)\geq \gamma_1\quad \forall (x,t, y, u)\in Q\times \mathbb{R}\times\mathbb{R},\\
 & g_u(x, t, y, u)\geq \gamma_2\quad \forall (x,t, y, u)\in Q\times \mathbb{R}\times\mathbb{R}.
\end{align}

Recall that the boundary $\partial\Omega$ satisfies the property of positive geometric density if there exist number $\alpha^*\in (0, 1)$ and $R_0>0$ such that for all $x_0\in\partial \Omega$ and all $R\leq R_0$, one has
$$
|\Omega\cap B(x_0, R)|\leq (1-\alpha^*) |B(x_0, R)|,
$$ where $|C|$ denote Lebessgue measure of a measurable set $C$ in $\mathbb{R}^N$. It is known that if $\partial \Omega$ is of class $C^{1,1}$ then it satisfies this property. Moreover,  by using the theorem of supporting hyperplane \cite[Theorem 1.5, p.19]{H.Tuy}, we can show that if $\Omega \subset \mathbb R^N$ is a nonempty convex set, then  $\partial\Omega$ satisfies the property of positive geometric density with $\alpha^*=1/2$ and $R_0>0$.

The property of positive geometric density was introduced by E.Di.Benedetto in \cite{E.Di.Benedetto}. In the next theorem we are going to use this property for the H\"{o}lder continuity of solutions to the semilinear parabolic equations. 

\begin{theorem} \label{Theorem-Regularity} Suppose $y_0\in H^2(\Omega)\cap H_0^1(\Omega)$, hypotheses $(H1), H(2), (H3'), (H4')$, and $\partial \Omega$ is of class $C^{1,1}$ or $\Omega$ is convex.  Assume that there exist  multipliers  $e \in L^\infty(Q)$ and $\varphi \in W^{1, 2}(0, T; D, H) \cap L^\infty(Q)$ which satisfy conditions $(i)-(iv)$ of Theorem \ref{Theorem 1}. Then $\varphi, e$, $\bar y$ and $\bar u$ are H\"{o}lder continuous on $\bar Q$.
\end{theorem}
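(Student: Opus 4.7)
The plan is to follow the template of Rösch-Tröltzsch \cite[Theorem 6.1]{Rosch2}, adapted to the parabolic setting. The logical chain is: first obtain Hölder continuity of the state $\bar y$ on $\bar Q$, then of the adjoint state $\varphi$, then use the KKT stationary condition together with $(H4')$ to recover pointwise Hölder continuity of $\bar u$ (and hence of $e$). The compatibility of the data ($y_0 \in H^2 \cap H_0^1$, $y_0|_{\Gamma} = 0$) and the positive geometric density of $\partial\Omega$ (which holds both when $\partial\Omega \in C^{1,1}$ and when $\Omega$ is convex, by the supporting hyperplane theorem) are exactly what is needed to apply the boundary-version of DiBenedetto's Hölder regularity theorem for parabolic equations.

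For the state, I would rewrite the equation as
\begin{equation*}
\bar y_t + A\bar y = \bar u - f(\bar y) =: F \in L^\infty(Q),
\end{equation*}
since $\bar u \in L^\infty(Q)$ and $\bar y \in L^\infty(Q)$, and apply DiBenedetto's interior and boundary Hölder estimates. The positive geometric density property furnishes the boundary Hölder continuity near $\Sigma$, and the Sobolev embedding $H^2(\Omega) \hookrightarrow C^{0,\alpha}(\bar\Omega)$ (valid for $N=2,3$) gives Hölder continuity of $y_0$ at $t=0$; together with the compatibility $y_0 = 0$ on $\Gamma$, this yields $\bar y \in C^{0,\alpha_1}(\bar Q)$ for some $\alpha_1 \in (0,1)$. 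For the adjoint, apply the same argument to the time-reversed equation $\tilde\varphi_t + A^*\tilde\varphi + f'(\bar y(\cdot, T{-}\cdot))\tilde\varphi = -L_y - e g_y$, whose right-hand side lies in $L^\infty(Q)$ (using $\varphi, e \in L^\infty$ already proved in Theorem~\ref{Theorem 1} and hypothesis $(H3')$), with zero initial datum $\tilde\varphi(0)=0$; this produces $\varphi \in C^{0,\alpha_2}(\bar Q)$.

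The remaining and most delicate step is to upgrade $\bar u$ and $e$. From the stationary condition $L_u[x,t] - \varphi + e\, g_u[x,t] = 0$, complementarity, and $g_u \geq \gamma_2 > 0$, $L_{uu} \geq \gamma_1 > 0$, one can view $\bar u(x,t)$ as the (unique) minimizer in $v$ of
\begin{equation*}
v \mapsto L(x,t,\bar y(x,t), v) - \varphi(x,t)\,v \quad \text{subject to}\quad g(x,t,\bar y(x,t), v) \leq 0.
\end{equation*}
Strict convexity (from $L_{uu} \geq \gamma_1$) and the strict feasibility direction (from $g_u \geq \gamma_2$) make this pointwise problem a well-posed, strongly monotone KKT system whose solution mapping $\Pi(x,t,y,\varphi)$ depends Lipschitz-continuously on its arguments: either the constraint is inactive, in which case $\bar u$ is the implicit solution of $L_u(x,t,y,v) = \varphi$ (Lipschitz in $(x,t,y,\varphi)$ by $(H3')$ and $L_{uu} \geq \gamma_1$), or it is active, in which case $\bar u$ is the implicit solution of $g(x,t,y,v) = 0$ (Lipschitz by $(H3')$ and $g_u \geq \gamma_2$). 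A standard max/min patching argument, identical in structure to \cite{Rosch2}, shows that $\Pi$ is globally Lipschitz. Substituting the Hölder functions $x \mapsto (x, t, \bar y(x,t), \varphi(x,t))$ yields $\bar u \in C^{0,\alpha}(\bar Q)$ with $\alpha = \min(\alpha_1, \alpha_2)$, and then $e = (\varphi - L_u[\cdot,\cdot])/g_u[\cdot,\cdot] \in C^{0,\alpha}(\bar Q)$ follows from $(H3')$--$(H4')$.

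The main obstacle is the boundary Hölder regularity of $\bar y$ and $\varphi$: one must invoke DiBenedetto's machinery under the positive geometric density hypothesis, and carefully verify the compatibility of $y_0$ with the homogeneous Dirichlet datum at the parabolic corner $\{0\}\times\Gamma$ (for $\bar y$) and the trivially compatible final condition $\varphi(\cdot, T) = 0$ (for $\varphi$). The remark flagged by the authors, namely Remark 4.1, presumably signals a modification at precisely this point since the nonlinear term $f'(\bar y)\varphi$ must be treated as a bounded lower-order perturbation. Once both Hölder estimates are in place, the KKT patching is purely algebraic and the conclusion follows.
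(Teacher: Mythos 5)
Your proposal is correct and follows essentially the same route as the paper: DiBenedetto's Hölder estimates under the positive geometric density property for $\bar y$ and for the time-reversed adjoint equation with bounded right-hand side, followed by the Rösch--Tröltzsch-style pointwise treatment of the KKT system using the implicit function for the constraint boundary together with $L_{uu}\geq\gamma_1$ and $g_u\geq\gamma_2$. The only difference is organizational: the paper first establishes a max-formula showing that $g_u[\cdot,\cdot]\,e$ is Hölder continuous, then deduces Hölder continuity of $\bar u$ from strong convexity and finally of $e$ by dividing by $g_u$, whereas you obtain $\bar u$ directly as the Lipschitz (min-type) solution map of the pointwise constrained problem and then recover $e$ — the two patchings are equivalent.
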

\begin{proof} By the Sobolev inequality (see \cite[Theorem 6, p. 270]{Evan}) we have
$y_0\in C^{0, \alpha}(\Omega)\cap H_0^1(\Omega)$ for some $\alpha\in (0, 1]$.  Since implications $(H3')\Rightarrow (H3)$ and $(H4')\Rightarrow (H4)$ are valid, the existence of multipliers $\varphi$ and $e$ follows from Theorem \ref{Theorem 1}.   It remains to show that $\bar y, \bar u, \varphi$ and $e$ are H\"{o}lder continuous.  

\noindent \textbf{Step 1.}  Showing that $\bar y$ and $\varphi$ are H\"{o}lder continuous functions.

Since $\bar y \in L^\infty(Q)$, $\bar y$ is as a bounded solution of the parabolic (1.2)-(1.3). From this, $y_0\in C^{0, \alpha}(\bar \Omega)$ and  \cite[Corollary 0.1]{E.Di.Benedetto}, we see that  $\bar y$ is $\alpha-$H\"{o}lder continuous on $\bar Q$ with exponent $\alpha \in (0,1)$. By setting $\widehat \varphi(t) = \varphi(T - t)$, then $\widehat \varphi$ is unique solution of equation
    \begin{align}
        \frac{\partial \widehat \varphi (t)}{\partial t} + A^* \widehat \varphi(t) + f'(\bar y (x, T - t)) \widehat \varphi (t) = - L_y[x, T-t] - e(x, T-t)g_y[x, T-t], \quad  \widehat \varphi (., 0) = 0.
    \end{align}
    By the same argument, we have from \cite[Corollary 0.1]{E.Di.Benedetto} that $\widehat \varphi$ is also $\alpha-$H\"{o}lder continuous on $\bar Q$. It follows that   $\varphi$ is $\alpha-$H\"{o}lder continuous. 

    \noindent \textbf{Step 2.} Showing that there is a function $\phi : \bar Q \times \mathbb R \to \mathbb R$ such that for every $y \in \mathbb R$, $\phi (., ., y)$ is Lipschitz, and for all $(x, t) \in \bar Q$ the function $\phi(x, t, .)$ is locally Lipschitz, i.e., 
    \begin{align*}
    &|\phi(x_1, t_1, y)-\phi(x_2, t_2, y)|\leq L_{\phi} (|x_1-x_2| +|t_1-t_2|),\quad \forall (x_i, t_i, y)\in Q\times\mathbb{R}, i=1,2\\
    &| \phi(x, t, y_1) - \phi(x, t, y_2)| \leq L_{\phi, M} |y_1 - y_2| \quad \text{for all} \ \ |y_1|, |y_2| \le M,
    \end{align*}
    where $L_\phi, L_{\phi, M} > 0$ are  constants, and moreover, it holds
        \begin{align}
        \label{Holder-2.1}
            g(x, t, y, u) \begin{cases}
                = 0  \Leftrightarrow  u = \phi(x, t, y),\\
                < 0  \Leftrightarrow  u < \phi(x, t, y),\\
                > 0  \Leftrightarrow  u > \phi(x, t, y).
            \end{cases}
        \end{align}
        This fact is established in the same way of the proof of \cite[Lemma 5.2]{Rosch2}.
        
    \noindent  \textbf{Step 3.}  Showing that $(x, t) \mapsto g_u[x, t]e(x, t)$ belongs to $C^{0, \alpha}(\bar Q)$.\\
    To do this, it's sufficient to  show that
    \begin{align}
    \label{Holder-5.5}
        g_u[x, t]e(x, t) = {\rm max} \bigg\{ 0, \ \varphi(x, t)  -  L_u\bigg(x,   t,   \bar y(x, t),  \phi(x, t, \bar y(x,t))\bigg) \bigg\} \quad \forall \ (x, t) \in Q
    \end{align}
    and  the function
    \begin{align}
    \label{Holder-5.6}
        (x, t)  \mapsto   L_u\big(x,   t,   \bar y(x, t),  \phi(x, t, \bar y(x,t))\big)
    \end{align} is H\"{o}lder continuous.  For this we  consider two cases:
 
 \noindent \textit{Case 1.} $(x, t) \in Q^+:= \{(x, t) \in Q: e(x, t) > 0\}$. 
 
 Then we have $e(x, t) > 0$ for all $(x, t) \in Q^+$. It follows from this and the complementary slackness condition $(iii)$ of Theorem \ref{Theorem 1}, 
    $g[x, t] = 0$. From this and \eqref{Holder-2.1}, we get 
    \begin{align}
    \label{Holder5.2}
        u(x, t) = \phi(x, t, y(x, t)) \quad {\rm a.a. \ on} \ Q^+. 
    \end{align}
    From $(ii)$ of Theorem \ref{Theorem 1}, 
    \begin{align}
    \label{Holder5.3}
        \varphi(x, t)  -  L_u\big(x,   t,   \bar y(x, t),  u(x, t)\big) = g_u[x, t]e(x, t) \quad {\rm a.a. \ on} \ Q^+. 
    \end{align}
    Inserting (\ref{Holder5.2}) onto (\ref{Holder5.3}),
    \begin{align}
        \varphi(x, t)  -  L_u\big(x,   t,   \bar y(x, t),  \phi(x, t, \bar y(x,t))\big) = g_u[x, t]e(x, t) \ge \gamma_2 e(x, t)   > 0 \quad {\rm a.a. \ on} \ Q^+. 
    \end{align}
    Hence we obtain
    \begin{align*}
        g_u[x, t]e(x, t) = {\rm max} \bigg\{ 0, \ \varphi(x, t)  -  L_u\bigg(x,   t,   \bar y(x, t),  \phi(x, t, \bar y(x,t))\bigg) \bigg\} \quad \forall \ (x, t) \in Q^+.
    \end{align*}

    \noindent \textit{Case 2.}  $(x, t) \in Q \backslash Q^+ = \{(x, t) \in Q: e(x, t) = 0\}$. 
    
    Then, from $(ii)$ of Theorem \ref{Theorem 1}, 
    \begin{align}
    \label{Holder5.4}
        \varphi(x, t) - L_u\big(x,   t,   \bar y(x, t),  u(x, t)\big) = g_u[x, t]e(x, t) = 0 \quad {\rm a.a. \ on} \ Q \backslash Q^+. 
    \end{align}
    Moreover, since $g[x, t] \le 0$ a.a. on $Q$, and from (\ref{Holder-2.1}), we have 
    \begin{align}
        u(x, t) \le \phi(x, t, y(x, t)) \quad {\rm a.a. \ on} \ Q \backslash Q^+.
    \end{align}
    Combining this fact and the monotonicity property of function $L_u(x, t, y, .)$ from assumption $(H4')$ we have 
    \begin{align}
        L_u\big(x,   t,   \bar y(x, t),  u(x, t) \le L_u\big(x,   t,   \bar y(x, t),  \phi(x, t, \bar y(x,t))\big)  \quad {\rm a.a. \ on} \ Q \backslash Q^+.
    \end{align}
    Together  with (\ref{Holder5.4}), this implies 
    \begin{align}
         \varphi(x, t)  -  L_u\big(x,   t,   \bar y(x, t),  \phi(x, t, \bar y(x,t))\big) \le 0 \quad {\rm a.a. \ on} \ Q \backslash Q^+. 
    \end{align}
    Hence we get
    \begin{align*}
        g_u[x, t]e(x, t) = {\rm max} \bigg\{ 0, \ \varphi(x, t)  -  L_u\bigg(x,   t,   \bar y(x, t),  \phi(x, t, \bar y(x,t))\bigg) \bigg\} \quad \forall \ (x, t) \in Q \backslash Q^+.
    \end{align*} Thus (\ref{Holder-5.5}) is established. 
    
    We now check (\ref{Holder-5.6}). Indeed, for any $(x_1, t_1), (x_2, t_2) \in Q$, we have 
    \begin{align*}
      I :=   &|L_u\big(x_1,   t_1,   \bar y(x_1, t_1),  \phi(x_1, t_1, \bar y(x_1,t_1))\big) - L_u\big(x_2,   t_2,   \bar y(x_2, t_2),  \phi(x_2, t_2, \bar y(x_2,t_2))\big)| \\
        &= |L_u\big(x_1,   t_1,   \bar y(x_1, t_1),  \phi(x_1, t_1, \bar y(x_1,t_1))\big) - L_u\big(x_2,   t_2,   \bar y(x_1, t_1),  \phi(x_1, t_1, \bar y(x_1,t_1))\big)|  \\
        &+|L_u\big(x_2,   t_2,   \bar y(x_1, t_1),  \phi(x_1, t_1, \bar y(x_1,t_1))\big) - L_u\big(x_2,   t_2,   \bar y(x_2, t_2),  \phi(x_2, t_2, \bar y(x_2,t_2))\big)| \\
        &=: I_1 + I_2.
    \end{align*}
    Since the function $L_u(., ., y, u)$ is Lipschitz for all fixed $y, u \in \mathbb R$, there exists a constant $k_{L, M}$ such that 
    \begin{align}
        I_1 \le k_{L, M}|(x_2 - x_1, t_2 - t_1)|.
    \end{align}
    Using assumption $(H3')$ with $M = {\rm max}(\|\bar y\|_{L^\infty(Q)}, \|\bar u\|_{L^\infty(Q)}) + 1$, we have 
    \begin{align}
        I_2 &\le k_{L, M} \bigg(|\bar y(x_1, t_1) - \bar y(x_2, t_2)| + |\phi(x_1, t_1, \bar y(x_1,t_1)) - \phi(x_2, t_2, \bar y(x_2,t_2))| \bigg) 
    \end{align}
    Since $\bar y \in C^{0, \alpha}(\bar Q)$, $\phi(., ., \bar y) \in C^{0, 1}(\bar Q)$ and $\phi(x, t, .)$ is locally Lipschitz,
      \begin{align}
        I_2 &\le k_{L, M} \bigg( H_{\bar y}|(x_2 - x_1, t_2 - t_1)|^\alpha         + |\phi(x_1, t_1, \bar y(x_1,t_1)) - \phi(x_2, t_2, \bar y(x_1,t_1))| \nonumber \\
        & \hspace{5.5cm} + |\phi(x_2, t_2, \bar y(x_1,t_1)) - \phi(x_2, t_2, \bar y(x_2,t_2))| \bigg) \nonumber \\
        &\le k_{L, M} \bigg( H_{\bar y}|(x_2 - x_1, t_2 - t_1)|^\alpha + L_{1, \phi}|(x_2 - x_1, t_2 - t_1)| + L_{2, \phi}|\bar y(x_2,t_2) - \bar y(x_1,t_1)|  \bigg) \nonumber \\
        &\le k_{L, M} \bigg[ (L_{2, \phi} + 1)H_{\bar y}|(x_2 - x_1, t_2 - t_1)|^\alpha + L_{1, \phi}|(x_2 - x_1, t_2 - t_1)|  \bigg] \nonumber \\
        &= k_{L, M} \bigg[ (L_{2, \phi} + 1)H_{\bar y} + L_{1, \phi}|(x_2 - x_1, t_2 - t_1)|^{1 -  \alpha}  \bigg]|(x_2 - x_1, t_2 - t_1)|^\alpha \nonumber \\
        &\le k_{L, M} \bigg[ (L_{2, \phi} + 1)H_{\bar y} + L_{1, \phi}({\rm diam}Q)^{1 -  \alpha}  \bigg]|(x_2 - x_1, t_2 - t_1)|^\alpha.
    \end{align}
    Here ${\rm diam}(Q):= {\rm sup}_{z_1, z_2 \in Q, z_1 \ne z_2}|z_1 - z_2|$, $Q$ is bounded, ${\rm diam}(Q) < +\infty$. 
    Hence, we obtain 
    \begin{align}
        I \le H_{L_u}|(x_2 - x_1, t_2 - t_1)|^\alpha 
    \end{align}
    for some constant $H_{L_u}>0$ which is independent of $x_i$ and $t_i$. It follows that the function $ (x, t)  \mapsto   L_u\big(x,   t,   \bar y(x, t),  \phi(x, t, \bar y(x,t))\big)$
       belongs to $ C^{0, \alpha}(\bar Q)$. Thus, $g_u[., .]e(., .) \in C^{0, \alpha}(\bar Q)$.

\noindent \textbf{Step 4.}  Showing that $\bar u \in C^{0, \alpha}(\bar Q)$.\\
    Fixing any $(x_i, t_i)\in Q$ with $i=1,2$ and using a Taylor' expansion, we have
      \begin{align*}
      &|L_u(x_1, t_1, \bar y(x_1, t_1), \bar u(x_1, t_1))-L_u(x_1, t_1, \bar y(x_1, t_1), \bar u(x_2, t_2)|\\
      &=|L_{uu}(x_1, t_1, \bar y(x_1, t_1), \bar u(x_2, t_2) +\theta(\bar u(x_1, t_1)-\bar u(x_2, t_2))||(\bar u(x_1, t_1)-\bar u(x_2, t_2))|\\
      &\geq \gamma_1 |(\bar u(x_1, t_1)-\bar u(x_2, t_2))|,
      \end{align*}
where $\theta\in [0,1]$ and the last inequality follows from $(H4')$. Combining this with equality $L_u[\cdot, \cdot] = \varphi - g_u[\cdot, \cdot]e$, we get 
\begin{align*}
\gamma_1 |(\bar u(x_1, t_1)-\bar u(x_2, t_2))|&\leq |L_u(x_1, t_1, \bar y(x_1, t_1), \bar u(x_1, t_1))-L_u(x_1, t_1, \bar y(x_1, t_1), \bar u(x_2, t_2))|\\
&\leq |L_u(x_1, t_1, \bar y(x_1, t_1), \bar u(x_1, t_1))-L_u(x_2, t_2, \bar y(x_2, t_2), \bar u(x_2, t_2)|\\
&+ |L_u(x_2, t_2, \bar y(x_2, t_2), \bar u(x_2, t_2)-L_u(x_1, t_1, \bar y(x_1, t_1), \bar u(x_2, t_2))|\\
&=|\varphi(x_1, t_1)-\varphi(x_2, t_2) -g_u[x_1, t_1]e(x_1, t_1) + g_u[x_2, t_2]e(x_2,t_2)|\\
&+|L_u(x_2, t_2, \bar y(x_2, t_2), \bar u(x_2, t_2)-L_u(x_1, t_1, \bar y(x_1, t_1), \bar u(x_2, t_2))|\\
&\leq |\varphi(x_1, t_1)-\varphi(x_2, t_2)| +|-g_u[x_1, t_1]e(x_1, t_1) + g_u[x_2, t_2]e(x_2,t_2)|\\
&+|L_u(x_2, t_2, \bar y(x_2, t_2), \bar u(x_2, t_2)) - L_u(x_1, t_1, \bar y(x_1, t_1), \bar u(x_2, t_2))|\\
&\leq (k_\varphi +k_{g, M})|(x_1, t_1)-(x_2, t_2)|^\alpha +k_{L, M}(|t_1-t_2| +|x_1-x_2|).
\end{align*} for some $k_\varphi>0, k_{g, M}>0, k_{L, M}>0$ which are independent of $(x_i, t_i)$. Hence $\bar u$ is H\"{o}lder continuous on $\bar Q$.

\noindent \textbf{Step 5.}  Showing that the Lagrange multiplier $e$ is H\"{o}lder continuous on $\bar Q$.\\
    Define $\widetilde g(., .) = g_u[., .]e$, then $\widetilde g(., .) \in C^{0, \alpha}(\bar Q)$. Taking any $(x_1, t_1), (x_2, t_2) \in \bar Q$, we have  
       \begin{align}
           | e(x_1, t_1) - e(x_2, t_2)| &= \bigg|\frac{\widetilde g(x_1, t_1)}{g_u[x_1, t_1]} - \frac{\widetilde g(x_2, t_2)}{g_u[x_2, t_2]}  \bigg| \nonumber \\
           &= \bigg | \frac{1}{g_u[x_1, t_1]}[\widetilde g(x_1, t_1)  -  \widetilde g(x_2, t_2)] + [\frac{1}{g_u[x_1, t_1]} - \frac{1}{g_u[x_2, t_2]}]\widetilde g(x_2, t_2) \bigg| \nonumber \\
           &\le \bigg | \frac{1}{g_u[x_1, t_1]}\bigg| |\widetilde g(x_1, t_1)  -  \widetilde g(x_2, t_2)| + \bigg| \frac{g_u[x_2, t_2] - g_u[x_1, t_1]}{g_u[x_1, t_1]g_u[x_2, t_2]} \bigg| |\widetilde g(x_2, t_2)|. \nonumber 
           \end{align}
           From this and the facts $g_u[., .] \geq \gamma_2 > 0$ a.a. on $\bar Q$, $\widetilde g \in C^{0, \alpha}(\bar Q) \subset C(\bar Q)$, we obtain 
           \begin{align}
           \label{Holder-Step5.1}
               | e(x_1, t_1) - e(x_2, t_2)| \le \frac{1}{\delta_0} \big|\widetilde g(x_1, t_1)  -  \widetilde g(x_2, t_2)\big|  +  \frac{\|\widetilde g\|_{C(\bar Q)}}{\delta_0^2} \big|g_u[x_2, t_2] - g_u[x_1, t_1] \big|.
           \end{align}
           Since $\widetilde g(., .) \in C^{0, \alpha}(\bar Q)$, there exists $H_{\widetilde g} > 0$,
           \begin{align}
           \label{Holder-Step5.2}
               \big|\widetilde g(x_1, t_1)  -  \widetilde g(x_2, t_2)\big| \le H_{\widetilde g}|(x_2 - x_1, t_2 - t_1)|^\alpha.
           \end{align}
           Also,
           \begin{align}
               \big|g_u[x_2, t_2] - g_u[x_1, t_1] \big|  
               &= \bigg|g_u(x_2, t_2, \bar y(x_2, t_2), \bar u(x_2, t_2)) - g_u(x_1, t_1, \bar y(x_1, t_1), \bar u(x_1, t_1))\bigg| \nonumber \\
               &= \bigg|g_u(x_2, t_2, \bar y(x_2, t_2), \bar u(x_2, t_2)) - g_u(x_1, t_1, \bar y(x_2, t_2), \bar u(x_2, t_2))\bigg| \nonumber \\ 
               &\quad + \bigg|g_u(x_1, t_1, \bar y(x_2, t_2), \bar u(x_2, t_2)) - g_u(x_1, t_1, \bar y(x_1, t_1), \bar u(x_1, t_1))\bigg|.\notag
           \end{align}
           From this, $(H3')$ and H\"{o}lder continuity of $\bar y$ and $\bar u$, we get
           \begin{align}
               &\big|g_u[x_2, t_2] - g_u[x_1, t_1] \big| \nonumber \\ 
               &\leq  k_{g, M}(|x_2-x_1|+|t_2-t_1|) + k_{g,M} \bigg( |\bar y(x_2, t_2) - \bar y(x_1, t_1) | + |\bar u(x_2, t_2) - \bar u(x_1, t_1) | \bigg)\notag\\
               &\leq k_{g, M}(|x_2-x_1|+|t_2-t_1|) + k_{g, M}(k_{\bar y} +k_{\bar u})|(x_2 - x_1, t_2 - t_1)|^\alpha \label{Holder-Step5.3} 
           \end{align}  
           Combining  (\ref{Holder-Step5.1}), (\ref{Holder-Step5.2}) and \eqref{Holder-Step5.3}, we obtain
           \begin{align}
               |e(x_2, t_2) - e(x_1, t_1)| \le H_e|(x_2 - x_1, t_2 - t_1)|^\alpha, 
           \end{align}
           for some constant  $H_e>0$  which is independent of $(x_i, t_i)$. Hence $e \in C^{0, \alpha}(\bar Q)$.  The proof of the theorem is complete.     
\end{proof}

\begin{remark} {\rm Theorem \ref{Theorem-Regularity} is similar to Theorem 6.1 in \cite{Rosch2} but their conclusions and assumptions are somewhat different. Theorem \ref{Theorem-Regularity} concludes that  $e$ is H\"{older} continuous while \cite[Theorem 6.1]{Rosch2} concludes that $g_u[\cdot, \cdot] e$ is H\"{older} continuous. Assumption $(H2)$ requires that $f_y[x,t]\geq C_f$ while $(A5)$ in \cite{Rosch2} required that $f_y[x,t]$ is non-negative. In addition, in Theorem \ref{Theorem-Regularity} the H\"{o}lder continuity of multipliers can be guaranteed when $\Omega$ is convex.} 
\end{remark}

\noindent {\bf Acknowledgment} This research was supported by International Centre for Research and Postgraduate Training in Mathematics, Institute of Mathematics, VAST, under grant number ICRTM02-2022.01.

\end{document}